\documentclass[journal]{IEEEtran}
%\documentclass[letterpaper, 10 pt, journal, twoside]{ieeetran}
%
% If IEEEtran.cls has not been installed into the LaTeX system files,
% manually specify the path to it like:
% \documentclass[journal]{../sty/IEEEtran}

%\renewcommand{\baselinestretch}{2}

%\AtBeginDvi{\special{pdf:mapfile dlbase14.map}} %フォントを埋め込み

\usepackage{color}
\usepackage[pdftex]{graphicx}
\usepackage{framed}
\usepackage{bm}
\usepackage{comment}
\usepackage[verbose]{cite}
\usepackage{algorithm}
\usepackage{algorithmic}
\newtheorem{theorem}{Theorem}
\newtheorem{assumption}{Assumption}
\newtheorem{definition}{Definition}
\newtheorem{remark}{Remark}
\newtheorem{lemma}{Lemma}
\newtheorem{corollary}{Corollary}

\newtheorem{proof}{Proof}

\usepackage{latexsym}
\def\qed{\hfill $\Box$} 

\usepackage{ascmac}

% packages
\usepackage{subfiles}
\usepackage{physics}
\usepackage{here}
\usepackage{autonum}
\usepackage{amsmath}
\usepackage{amssymb}
\usepackage{amsfonts}
% macro
\newcommand{\R}{\mathbb{R}}

\newcommand{\C}{\mathbb{C}}
\newcommand{\minimize}{\mathop{\text{minimize }}\limits}

\newcommand{\subjectto}{\mathop{\text{subject to }}}
\newcommand{\vectorize}{\operatorname{vec}}
\newcommand{\lmin}{\lambda_{\min}}
\newcommand{\lmax}{\lambda_{\max}}
\def\q<#1>{\langle #1 \rangle}
\newcommand{\proj}{\operatorname{proj}}

\newcommand{\ones}{\operatorname{ones}}
\newcommand{\rand}{\operatorname{rand}}
\newcommand{\randn}{\operatorname{randn}}
\newcommand{\normF}[1]{\norm{#1}_\mathrm{F}}
\renewcommand\Re{\operatorname{Re}}
\renewcommand\Im{\operatorname{Im}}

% Some very useful LaTeX packages include:
% (uncomment the ones you want to load)

% *** MISC UTILITY PACKAGES ***
%
%\usepackage{ifpdf}
% Heiko Oberdiek's ifpdf.sty is very useful if you need conditional
% compilation based on whether the output is pdf or dvi.
% usage:
% \ifpdf
%   % pdf code
% \else
%   % dvi code
% \fi
% The latest version of ifpdf.sty can be obtained from:
% http://www.ctan.org/pkg/ifpdf
% Also, note that IEEEtran.cls V1.7 and later provides a builtin
% \ifCLASSINFOpdf conditional that works the same way.
% When switching from latex to pdflatex and vice-versa, the compiler may
% have to be run twice to clear warning/error messages.

% *** CITATION PACKAGES ***
%
%\usepackage{cite}
% cite.sty was written by Donald Arseneau
% V1.6 and later of IEEEtran pre-defines the format of the cite.sty package
% \cite{} output to follow that of the IEEE. Loading the cite package will
% result in citation numbers being automatically sorted and properly
% "compressed/ranged". e.g., [1], [9], [2], [7], [5], [6] without using
% cite.sty will become [1], [2], [5]--[7], [9] using cite.sty. cite.sty's
% \cite will automatically add leading space, if needed. Use cite.sty's
% noadjust option (cite.sty V3.8 and later) if you want to turn this off
% such as if a citation ever needs to be enclosed in parenthesis.
% cite.sty is already installed on most LaTeX systems. Be sure and use
% version 5.0 (2009-03-20) and later if using hyperref.sty.
% The latest version can be obtained at:
% http://www.ctan.org/pkg/cite
% The documentation is contained in the cite.sty file itself.

% *** GRAPHICS RELATED PACKAGES ***
%
\ifCLASSINFOpdf
  % declare the path(s) where your graphic files are
  % \graphicspath{{../pdf/}{../jpeg/}}
  % and their extensions so you won't have to specify these with
  % every instance of \includegraphics
  % \DeclareGraphicsExtensions{.pdf,.jpeg,.png}
\else
  % or other class option (dvipsone, dvipdf, if not using dvips). graphicx
  % will default to the driver specified in the system graphics.cfg if no
  % driver is specified.
  % \usepackage[dvips]{graphicx}
  % declare the path(s) where your graphic files are
  % \graphicspath{{../eps/}}
  % and their extensions so you won't have to specify these with
  % every instance of \includegraphics
  % \DeclareGraphicsExtensions{.eps}
\fi
% graphicx was written by David Carlisle and Sebastian Rahtz. It is
% required if you want graphics, photos, etc. graphicx.sty is already
% installed on most LaTeX systems. The latest version and documentation
% can be obtained at: 
% http://www.ctan.org/pkg/graphicx
% Another good source of documentation is "Using Imported Graphics in
% LaTeX2e" by Keith Reckdahl which can be found at:
% http://www.ctan.org/pkg/epslatex
%
% latex, and pdflatex in dvi mode, support graphics in encapsulated
% postscript (.eps) format. pdflatex in pdf mode supports graphics
% in .pdf, .jpeg, .png and .mps (metapost) formats. Users should ensure
% that all non-photo figures use a vector format (.eps, .pdf, .mps) and
% not a bitmapped formats (.jpeg, .png). The IEEE frowns on bitmapped formats
% which can result in "jaggedy"/blurry rendering of lines and letters as
% well as large increases in file sizes.
%
% You can find documentation about the pdfTeX application at:
% http://www.tug.org/applications/pdftex

% *** MATH PACKAGES ***
%
\usepackage{amsmath}
\hyphenation{op-tical net-works semi-conduc-tor}

\begin{document}
%
% paper title
% Titles are generally capitalized except for words such as a, an, and, as,
% at, but, by, for, in, nor, of, on, or, the, to and up, which are usually
% not capitalized unless they are the first or last word of the title.
% Linebreaks \\ can be used within to get better formatting as desired.
% Do not put math or special symbols in the title.
\title{Structured Output Feedback Control for Linear Quadratic Regulator Using Policy Gradient Method}
%
%
% author names and IEEE memberships
% note positions of commas and nonbreaking spaces ( ~ ) LaTeX will not break
% a structure at a ~ so this keeps an author's name from being broken across
% two lines.
% use \thanks{} to gain access to the first footnote area
% a separate \thanks must be used for each paragraph as LaTeX2e's \thanks
% was not built to handle multiple paragraphs
%

\author{Shokichi~Takakura and Kazuhiro~Sato
\thanks{S. Takakura is with the Department of Mathematical Informatics, 
Graduate School of Information Science and Technology, The University of Tokyo, Tokyo 113-8656, Japan,
email: masayoshi361@g.ecc.u-tokyo.ac.jp}
\thanks{K. Sato is with the Department of Mathematical Informatics, 
Graduate School of Information Science and Technology, The University of Tokyo, Tokyo 113-8656, Japan,
email: kazuhiro@mist.i.u-tokyo.ac.jp}

%\thanks{Manuscript received April 19, 2005; revised August 26, 2015.}
}

\maketitle
\thispagestyle{empty}
\pagestyle{empty}

% As a general rule, do not put math, special symbols or citations
% in the abstract or keywords.
\begin{abstract}
We consider the static output feedback control for Linear Quadratic Regulator problems with structured constraints
under the assumption that system parameters are unknown.
To solve the problem in the model free setting, we propose the policy gradient algorithm based on the gradient projection method
and show its global convergence to $\varepsilon$-stationary points.
In addition, we introduce a variance reduction technique
and show both theoretically and numerically
that it significantly reduces the variance in the gradient estimation.
We also show in the numerical experiments that the model free approach efficiently solves the problem.
\end{abstract}

% Note that keywords are not normally used for peerreview papers.
\begin{IEEEkeywords}
Reinforcement learning, data-driven control, model free control, linear quadratic regulator, gradient descent, non-convex optimization
\end{IEEEkeywords}

% For peer review papers, you can put extra information on the cover
% page as needed:
% \ifCLASSOPTIONpeerreview
% \begin{center} \bfseries EDICS Category: 3-BBND \end{center}
% \fi
%
% For peerreview papers, this IEEEtran command inserts a page break and
% creates the second title. It will be ignored for other modes.
\IEEEpeerreviewmaketitle

% \documentclass[ex_article]{subfiles}
% \begin{document}
\section{Introduction}\label{sec:intro}
% The Linear Quadratic Regulator (LQR) is a well-studied framework in the optimal control theory.
% The purpose of the LQR problem is to determine the optimal feedback controller which minimizes a quadratic cost function for a linear time invariant system.
% The optimal feedback control is usually given by a static linear state feedback and the optimal feedback gain is obtained by solving the algebraic Riccati equation (ARE)~\cite{lewis2012optimal}.
% To solve AREs, iterative methods are proposed in~\cite{hewer1971iterative, kleinman1968iterative}.
% Recently, the global convergence of the gradient descent for state and output feedback LQR problems was shown in [4] using smoothness and Lipschitz continuity on the sublevel sets of the objective function.
% In addition, the projected gradient method to obtain a feedback gain with sparsity pattern for state feedback LQR problems is studied in~\cite{bu2019lqr} and~\cite{talebi2022policy} considered linearly constrained problem for state feedback LQR problems.
% These approaches to the optimal control design require the complete knowledge of the system.

% In real-world application, the full knowledge of the dynamics might not be available.

%The field of Reinforcement Learning (RL) has been developing rapidly
%with a variety of applications such as robotic manipulation~\cite{gu2017deep},
%game playing \cite{silver2017mastering}, and recommendation tasks~\cite{zhao2017deep}.
%Recently, the

Linear Quadratic Regulator (LQR), which is a well-studied framework in the optimal control theory, has been revisited from the Reinforcement Learning (RL) perspective.
% Some RL methods for LQR problems have been studied for state feedback control.
% For example, Q-learning for discrete-time LQR problems was proposed in~\cite{bradtke1993reinforcement}.
For policy gradient methods, the global linear convergence to the global optima was obtained in~\cite{fazel2018global, mohammadi2021convergence}.
To obtain structured policy, Structured Policy Iteration of state feedback gains for LQR problems with a regularization term was proposed in~\cite{park2020structured}
and the local linear convergence to a stationary point was provided.
In addition, the projected gradient method for model-free state feedback LQR problems with convex constraints were studied in~\cite{hambly2021policy}. For the model based setting, the projected gradient method was studied in~\cite{bu2019lqr} and~\cite{talebi2022policy} considered linearly constrained problem for state feedback LQR problems.

%Although the state feedback control has been studied from various perspectives,

However, it is difficult to observe the entire state.
That is, only some outputs are available in practice.
The static output feedback control is a practical approach to deal with such situations.
For model based control design, some iterative methods are found in~\cite{makila1987computational}
and recently, the global convergence of the gradient descent for output feedback LQR problems was shown in~\cite{fatkhullin2021optimizing} using smoothness and Lipschitz continuity on the sublevel sets of the LQR objective function.
A model free algorithm was also proposed in~\cite{zhu2015adaptive} based on integral RL.
However, policy gradient methods for static output feedback problems in the model free setting have not been well studied.

In this study, we consider a policy gradient method for the LQR problem with structured constraints
for the static output feedback control under the assumption that system parameters are unknown,
in contrast to many existing works~\cite{fazel2018global, mohammadi2021convergence, hambly2021policy}, which studied the policy gradient method for state feedback LQR problems and~\cite{fatkhullin2021optimizing}, which studied gradient methods in the model based setting.
The structured constraints are naturally introduced due to the system structure such as linear port-Hamiltonian systems~\cite{Jacob2012linear}.

{\it Our contribution:}
The main contributions of this paper are summarized as follows:
\begin{itemize}
    \item To solve the LQR problem with structured constraints in the model free setting, we propose a policy gradient projection algorithm with a gradient estimation procedure.
    \item We show the global convergence to $\varepsilon$-stationary points of our proposed algorithm using the LQR objective function properties such as bounded sublevel sets, $L$-smoothness on sublevel sets, and dependency on horizon time.
    In addition, we show that the feedback gain obtained by the proposed method asymptotically stabilizes the closed-loop system.
    We also provide the sample complexity of the gradient estimation procedure.
    \item We propose a variance reduction method using the baseline technique and show its suboptimality.
\end{itemize}

{\it Paper organization:}
In Section \ref{sec:problem}, we introduce the LQR problem for output feedback control with structured constraints.
In Section \ref{sec:properties}, we show some properties of the objective function on sublevel sets.
In Section \ref{sec:model-free}, we propose the gradient estimation method and the policy gradient projection algorithm in the model free setting.
We then show that the algorithm outputs an $\varepsilon$-stationary point with high probability.
In addition, we provide a variance reduction method and show its asymptotic optimality.
In Section \ref{sec:experiments}, we conduct some numerical experiments and show properties of our proposed method.
Section \ref{sec:conclusion} is devoted to conclusion.

{\it Notation:}
For a vector $v \in \C^n$, $v^\top$ and $v^*$ denote the transpose and conjugate transpose of $v$, respectively.
The symbol $I$ and $O$ denote the identity matrix and the zero matrix, respectively.
The symbol $\mathbb{S}^n$ denotes the set of $n\times n$ symmetric matrices.
For a matrix $A\in \R^{n\times m}$, $\normF{A}$ and $\norm{A}_2$ represent Frobenius and spectral norm of $A$, respectively,
$\lambda_i(A)$ denotes the $i$-th eigenvalue of $A$ indexed as $\Re(\lambda_1(A)) \leq \dots \leq \Re(\lambda_n(A))$,
and $\vectorize(A) \in \R^{nm}$ denotes the vectorized form of $A$.
For matrices $A, B \in \R^{n\times m}$, the inner product $\q<A, B>$ is defined as $\q<A, B> = \tr(AB^\top)$
and $A\circ B$ denotes the Hadamard product of $A$ and $B$.
Given a symmetric matrix $S \in \mathbb{S}^n$, $\lmin(S)(\lmax(S))$ denotes the minimum (maximum) eigenvalue of $S$.
Given a random variable $X$ which follows the distribution $\mathcal D$,
$E_{X \sim \mathcal{D}}[X]$ or just $E[X]$ denotes the expectation over $X \sim \mathcal D$
and $V[X]$ denotes the variance of $X$. For $z\in \C$, $\Re(z)$ ($\Im(z)$) denotes the real (imaginary) part of $z$.
% \end{document}
% \documentclass[ex_article]{subfiles}
% \begin{document}
\section{Problem Formulation}\label{sec:problem}
%\subsection{Linear system}
We consider the linear time-invariant (LTI) system
\begin{align}
  \begin{aligned}
    \dot x(t) = Ax(t)+Bu(t), \quad
    y(t)     = C x(t),\quad
    x(0) \sim \mathcal D,
  \end{aligned}\label{eq:system}
\end{align}
where $x(t) \in \R^n$ is state, $u(t)\in \R^m$ is input,
$y(t)\in \R^p$ is output, $A\in \R^{n\times n}$,
$B\in \R^{n\times m}$, and $C \in \R^{p\times n}$ are constant matrices, and $\mathcal D$ is a probability distribution over $\R^n$.
In this paper, we assume that $B$ and $C$ are not zero matrices,
and
$(A, B, C, \mathcal {D})$
is unknown unlike the situation in \cite{fatkhullin2021optimizing}.
%\subsection{LQR problem with structured constraints} \label{Sec2-B}
The infinite-horizon continuous-time LQR problem is formulated as
\begin{align}
  \minimize  & E_{x(0)\sim \mathcal{D}}\qty[\int_0^\infty \qty(y^\top(t) Q y(t) + u^\top(t) R u(t))dt ] \label{eq:objectivefunction} \\
  \subjectto & ~\eqref{eq:system}
\end{align}
with constant positive definite matrices $Q \in \R^{p\times p}$ and $R\in \R^{m\times m}$.
The expectation is taken with respect to the initial state $x(0) \sim \mathcal{D}$.
For the static output feedback $u(t) = -Ky(t)$ with $K\in \R^{m\times p}$ to system~\eqref{eq:system},
the objective function~\eqref{eq:objectivefunction} becomes
$
  f(K) := E_{x(0)\sim \mathcal{D}}\qty[\tilde{f}(K;x(0))],
$
where 
\begin{align}
  \tilde f(K;v) & := \int_0^\infty \qty[v^* e^{A_K^\top t}C^\top(Q+ K^\top RK)Ce^{A_K t}v]dt\label{eq:cost}
\end{align}
for $v\in \C^n$.
Then, the closed-loop is given by
\begin{align}
    \dot x(t)  = A_Kx(t),\quad
    y(t)          = Cx(t), \label{eq:closedloop}
\end{align}
where 
\begin{align}
  A_K := A-BKC. \label{eq:AK}
\end{align}

In this paper, we consider the constraints $K\in \Omega$,
where $\Omega \subset \R^{m\times p}$ is a closed convex set
that specifies the structural information of feedback gains.
This is because a structured policy is often used in practical situations.
For example,
%\subsubsection{Decentralized control}
\begin{itemize}
    \item Decentralized control: In decentralized control, some components of $K$ need to be $0$~\cite{jovanovic2016controller}.
This implies that $\Omega$ should be a certain linear subspace of $\R^{m\times p}$.

\item Linear port-Hamiltonian system: For a linear port-Hamiltonian system~\cite{Jacob2012linear}, 
if the feedback gain is positive semi-definite, the closed loop system is also a port-Hamiltonian system and passive. To ensure passivity, $\Omega$ should be defined as the set of positive semi-definite matrices, which is closed and convex.

\end{itemize}
%\subsubsection{Linear port-Hamiltonian system}

%See Section~\ref{sec:examples}.

By using Bellman lemma~\cite{bellman1957notes}, the problem~\eqref{eq:objectivefunction} with structured constraints can be formulated as
\begin{align}
  \begin{aligned}
    \minimize_K & f(K) = \tr(X\Sigma) \\
    \subjectto  & K \in \Omega\,\, \text{and}\,\, A_K\text{ is \textit{Hurwitz}},
  \end{aligned}\label{eq:problem}
\end{align}
where
$
  \Sigma  := E[x(0)x^\top(0)]
$
and $X$ is the solution to
\begin{align}
  A_K^\top X + XA_K + C^\top \qty(K^\top RK + Q)C = 0.
\end{align}
It is difficult to solve \eqref{eq:problem}, since  $f(K)$ is non-convex and saddle points may exist~\cite{fatkhullin2021optimizing}.
Moreover,
 the feasible set may have exponentilally many disconnected components~\cite{feng2019exponential}. 
Although an iterative method was proposed in \cite{zhu2015adaptive} to obtain a suboptimal static output feedback gain in the model free setting,
it cannot be applied directly to problem \eqref{eq:problem} due to the constraint $K\in \Omega$.

% In this paper, we suppose that $(A, B, C, \mathcal {D})$ in \eqref{eq:system}
% is unknown unlike the situation in \cite{fatkhullin2021optimizing}.
% Thus, we develop a model free algorithm in Section \ref{sec:model-free} for solving the problem \eqref{eq:problem}.

To develop a model free algorithm with theoretical guarantees for solving problem \eqref{eq:problem}, we impose the following throughout this paper:
\begin{assumption}\label{assume:sigma-hurwitz}
  \indent
  \begin{enumerate}
    \item $\Sigma \succ 0$.
    \item The pair $(A, C)$ is observable.
    \item There exists $K_0 \in \Omega$
          such that $A_{K_0}$ is \textit{Hurwitz}
          and $K_0$ is known.
  \end{enumerate}
\end{assumption}

Since $A_{K_0}$ is \textit{Hurwitz},
there exist positive definite matrices $G, H$ and a skew-adjoint matrix $J$
such that
$
  A_{K_0} = (J-G)H.
$
The proof is found in~\cite{prajna2002lmi}.
Let $H = L^\top L$ be the Cholesky decomposition.
Using the coordinate transformation $x'(t) = Lx(t)$, the closed-loop system~\eqref{eq:closedloop} becomes
\begin{align}
  \dot{x'}(t)  = A'_{K_0}x'(t), \quad 
  y(t)         = C'x'(t),
\end{align}
where $A'_{K_0}  = LJL^\top-LGL^\top, C' = CL^{-1}$.
Since $LGL^\top\succ 0$ and $LJL^\top = -(LJL^\top)^\top$, we have
  $A'_{K_0}+{A'_{K_0}}^\top = -2LGL^\top \prec 0$.
In the following, we assume system \eqref{eq:closedloop} after the above coordinate transformation, because we consider a static output feedback that is invariant by the coordinate transformation. That is, without loss of generality, we can assume
$A_{K_0}+A_{K_0}^\top \prec 0$.

Under Assumption \ref{assume:sigma-hurwitz},
 $f(K)$ of \eqref{eq:problem} is defined only on the set $S$ of stabilizing controllers, which is defined as
\begin{align}
  S = \{K\in \R^{m\times p}\mid A_K\text{ is \textit{Hurwitz}}\}. \label{eq:S}
\end{align}
If $K\notin S$, there exists an eigenvalue $\mu$ of $A$ such that ${\rm Re}(\mu) \geq 0$ and $f(K)$ goes to infinity.

\begin{remark} \label{remark1}
The objective function of
 problem \eqref{eq:objectivefunction}
is not a
standard LQR cost
%is given by
% \begin{align}
%     \int_0^\infty \qty(x^\top(t) Q x(t) + u^\top(t) R u(t))dt,
% \end{align}
% where $Q \in \R^{n\times n}$ and $R\in \R^{m\times m}$ are positive definite matrices. However, in the model free and output feedback setting, this cost cannot be calculated in practice, because the information of the state $x(t)$ is not available. Therefore, we consider the problem \eqref{eq:objectivefunction} following
as in some previous researches~\cite{modares2016optimal, rizvi2018output}. While similar convergence properties to the standard LQR cost can be obtained for our formulation in the model based setting if $(A, C)$ is observable~\cite{fatkhullin2021optimizing}, more detailed studies of the objective function properties are necessary for model-free version of the convergence analysis.  
\end{remark}

%\subsection{Examples}\label{sec:examples}

% \end{document}
% \documentclass[ex_article]{subfiles}
% \begin{document}
\section{Properties of the Objective Function}\label{sec:properties}
In this section, we prove some properties of the objective function $f(K)$ in~\eqref{eq:problem}
for the convergence analysis of the gradient method presented in Section~\ref{sec:model-free}.

\subsection{Norm bounds}
In this section, we show some matrix norm inequalities.

We define the sublevel set by
$
  S(a) = \{K \in S \mid f(K)\leq a\},
$
where $S$ is defined as \eqref{eq:S}. Thus, all elements in $S(a)$ are stabilizing feedback gains.

Using the same argument as Lemma C.2 in~\cite{fatkhullin2021optimizing}, we have $\norm{K}\leq \kappa(a)$ for $K\in S(a)$, where
\begin{align}
    \kappa(a)&:=\frac{2\norm{B}_2\norm{C}_2a}{\lmin{(\Sigma)}\lmin{(R)}\lmin{(CC^\top})}+\frac{\norm{A}_2}{\norm{B}_2\norm{C}_2}.
\end{align}

Next, for $K \in S(a)$,
we provide an upper bound on the solutions to the following Lyapunov equations:
\begin{align}
  A_K^\top X + X A_K + C^\top(Q+K^\top R K)C   & = 0, \label{eq:lyapunovX}     \\
  A_K Y + Y A_K^\top + \Sigma                  & = 0, \label{eq:lyapunovY}     \\
  A_K Y' + Y'A_K^\top - \qty(BECY+(BECY)^\top) & = 0,\label{eq:lyapunovYprime}
\end{align}
where $A_K$ is defined in~\eqref{eq:AK} and $E \in \R^{m\times p}$ is a given matrix.
Note that $X$, $Y$, and $Y'$ uniquely exist, because
$K \in S(a)$ implies that
$A_K$ is \textit{Hurwitz}~\cite{bellman1957notes}.
To simplify the notation,
using $\sigma := -\frac{1}{2}\lmax(A_{K_0}+A_{K_0}^\top)>0$,
we define
\begin{align}
  \xi & := \frac{1}{4\norm{B}_2\kappa(a)},\,\, \mathfrak{X}(a) :=\frac{a}{\lmin(\Sigma)},   \label{eq:frakX} \\
  \mathfrak{Y}(a)  & :=\max\qty(\frac{a}{\xi^2\lmin(Q)}, \frac{\norm{\Sigma}}{\sigma}),                               \label{eq:frakY}\\
  \mathfrak{Y}'(a) &:=2\frac{\norm{B}_2\norm{C}_2\mathfrak{Y}(a)^2}{\lmin(\Sigma)}\label{eq:frakYprime}
\end{align}
where $K_0$ satisfies 3 in Assumption \ref{assume:sigma-hurwitz}.

\begin{lemma}\label{lem:xybound}
  Let $X, Y, Y'$ be the solution to~\eqref{eq:lyapunovX},~\eqref{eq:lyapunovY}, and~\eqref{eq:lyapunovYprime}, respectively.
  Assume that $\normF{E} = 1$.
  Then, for any $K\in S(a)$,
  %\begin{align}
    $\norm{X}_2  \leq \mathfrak{X}(a)$,\,\,
    $\norm{Y}_2  \leq \mathfrak{Y}(a)$,\,\, 
    $\norm{Y'}_2 \leq \mathfrak{Y}'(a)$.%\label{eq:xybound}
  %\end{align}
\end{lemma}

\begin{proof}
See Appendix \ref{sec:proof-lemma1}. \qed
\end{proof}

\subsection{$L$-smoothness of $f(K)$ }

A differentiable function is called $L$-smooth if its gradient is $L$-Lipschitz continuous.
For our objective function, we have the following result.
\begin{theorem}
  For any $a\in \R$, $f(K)$ in~\eqref{eq:problem} is $L$-smooth on $S(a)$ with the constant 
  \begin{align}
    L & := 2\lmax(R)\normF{C}^2\mathfrak{Y}(a) \label{eq:L}\\
    &+4\qty(\sqrt{n}\normF{R}\kappa(a)\normF{C}+n\normF{B} \mathfrak{X}(a)) \mathfrak{Y}'(a) \normF{C},\nonumber
  \end{align}
  where $\mathfrak{Y}(a)$ and $\mathfrak{Y}'(a)$ are defined in~\eqref{eq:frakY} and~\eqref{eq:frakYprime}. %respectively.
\end{theorem}
\begin{proof}
Theorem 3.15 in~\cite{fatkhullin2021optimizing} cannot be applied to our setting directly, because $\lambda_1(C^\top QC)$ may be $0$.
However, by replacing the norm bounds in the proof of Theorem 3.15 with those in Lemma 1, we obtain the result. \qed
\end{proof}

% \end{document}
\section{Model Free Algorithm} \label{sec:model-free}
In this section, we consider problem \eqref{eq:problem} in the model free setting. That is, we assume that $(A, B, C, \mathcal D)$ in system \eqref{eq:system} is unknown. First, we introduce a gradient estimation algorithm
based on the derivative-free optimization and show that the estimated gradient is close to the exact gradient with high probability.
Then, we provide Policy Gradient Projection Algorithm in Algorithm \ref{alg:mfgp}.
Despite the error in the estimated gradient, we can prove the global convergence to $\varepsilon$-stationary points. 
Finally, we propose a variance reduction method using the baseline technique and prove its optimality.
\subsection{Gradient estimation}
In the model free setting, the exact gradient of the objective function $f(K)$
in~\eqref{eq:objectivefunction} cannot be accessed directly.
Then, based on the zeroth-order or derivative-free optimization approach~\cite{fazel2018global},
we propose Algorithm~\ref{alg:gradest1}
to calculate the stochastic estimate $\hat \grad f(K)$ of the gradient $\grad f(K)$.

\begin{algorithm}
  \caption{Gradient Estimation}
  \label{alg:gradest1}
  \begin{algorithmic}[1]
    \REQUIRE{$K\in \Omega, N>0, r > 0, \tau > 0$}
    \FOR{$i=1$ to $N$}
    \STATE{Sample $U_i$ from the uniform distribution $\mathcal{S}$ over matrices with $\normF{U_i} = \sqrt{mp}$.}
    \STATE{Simulate the system
      \begin{align}
        \dot x_i(t)  = Ax_i(t)+Bu_i(t),\quad
        y_i(t)       = Cx_i(t),
      \end{align}
      where $u_i(t) = -(K+rU_i)y_i(t)$, $x_i(0) \sim \mathcal D$
      until time $\tau$ and
      calculate the empirical cost %$c_i$ by
    %  \begin{align}
       $c_i = \int_0^\tau \qty[y_i^\top(t) Q y_i(t) + u_i(t)^\top R u_i(t)]dt$.
     % \end{align}
     }
    \ENDFOR
    \STATE{Define the estimated gradient by
      \begin{align}
        \hat \grad f(K) = \frac{1}{rN}\sum_{i=1}^N c_iU_i. \label{eq:hatgrad}
      \end{align}}
    \RETURN $\hat \grad f(K)$.
  \end{algorithmic}
\end{algorithm}

In this section, we assume the following in addition to Assumption \ref{assume:sigma-hurwitz}.
\begin{assumption}\label{assume:gradest}
  \indent
  \begin{itemize}
    \item ${U}_i$ and $x_i(0)$ are independent.
    \item $K\in S(a)$ with a given constant $a$.
    \item The distribution $\mathcal D$ of initial state $x(0) \sim \mathcal{D}$ satisfies $\norm{x(0)}\leq P\quad \text{a.s.}$ for a constant $P > 0$.
  \end{itemize}
\end{assumption}
The second assumption is justified in Section~\ref{sec:mfgp}.

The following lemma ensures $K+rU \in S(2a)$ for sufficiently small $r$.
\begin{lemma}
  \label{lem:smallr}
  There exists $r_0  > 0$ such that
  for any $r \leq r_0$, $K\in S(a)$ and $U$ such that $\norm{U} = \sqrt{mp}$, we have $K+rU\in S(2a)$.
\end{lemma}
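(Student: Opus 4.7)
The plan is to exploit two standard facts about the sublevel set $S(a) = \{K : f(K)\le a\}$ of the LQR objective: first, that it is compact and sits strictly inside the open set of stabilizing output-feedback gains, and second, that $f$ is continuous (indeed locally Lipschitz) on that open set.

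First I would show that $S(a)$ is compact. Closedness follows from continuity of $f$, and boundedness follows because $f(K)\to\infty$ as $K$ approaches the boundary of the stabilizing set or as $\|K\|\to\infty$; this is a standard coercivity property of the LQR cost that should already be available from Assumption~\ref{assume:sigma-hurwitz} and the setup of Section~\ref{sec:model-free}. Consequently $S(a)$ is a compact subset of the open stabilizing set, so there is some $\delta_0 > 0$ such that the $\delta_0$-thickening $S_{\delta_0}(a) := \{K' : \dist(K', S(a)) \le \delta_0\}$ is still contained in the stabilizing set, and is itself compact.

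Next, since $f$ is $C^1$ (in fact smooth) on the open set of stabilizing gains, $\nabla f$ is continuous and hence bounded on the compact set $S_{\delta_0}(a)$ by some constant $L > 0$; that is, $f$ is $L$-Lipschitz on $S_{\delta_0}(a)$. Now for any $K\in S(a)$ and $U$ with $\|U\| = \sqrt{mp}$, the segment $\{K + tU : 0\le t\le r\}$ lies in $S_{\delta_0}(a)$ provided $r\sqrt{mp}\le \delta_0$, and along this segment the fundamental theorem of calculus gives
\begin{align}
f(K+rU)\;\le\; f(K) + L\, r\, \sqrt{mp}\;\le\; a + L\, r\, \sqrt{mp}.
\end{align}
Choosing
\begin{align}
r_0 \;=\; \min\!\left\{\frac{\delta_0}{\sqrt{mp}},\; \frac{a}{L\sqrt{mp}}\right\}
\end{align}
then yields $f(K+rU)\le 2a$, i.e.\ $K+rU\in S(2a)$, for every $r\le r_0$.

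The main obstacle is the first step: justifying compactness of $S(a)$ and the existence of the safe tube $S_{\delta_0}(a)$ inside the stabilizing region. In the static output-feedback setting the stabilizing set need not be convex or even connected, so I cannot simply invoke convex-analytic arguments; I would rely on coercivity of $f$ (the cost blows up at the stabilizability boundary and at infinity) plus continuity to get compactness, and then use compactness plus openness of the stabilizing set to extract the uniform margin $\delta_0$. Everything after that is a routine Lipschitz-from-$C^1$-on-compact argument.
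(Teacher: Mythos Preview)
Your argument is correct, and it is essentially the standard route: the paper itself does not give a proof but simply invokes Lemma~4 of~\cite{mohammadi2021convergence}, whose content is precisely the compactness-of-sublevel-sets plus local-Lipschitz reasoning you spell out here. So you have supplied a self-contained version of what the paper outsources.

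One minor phrasing point: since $S_{\delta_0}(a)$ need not be convex in the static output-feedback setting, saying ``$f$ is $L$-Lipschitz on $S_{\delta_0}(a)$'' is a bit stronger than what boundedness of $\nabla f$ on that set immediately gives (two points in the same component could be far apart along any path inside the set). But this does not affect your conclusion: you only apply the Lipschitz bound along the straight segment $\{K+tU:0\le t\le r\}$, which lies entirely in $S_{\delta_0}(a)$ by construction, and for that the fundamental theorem of calculus together with $\sup_{S_{\delta_0}(a)}\normF{\nabla f}\le L$ is exactly what is needed.
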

\begin{proof}
Using the same argument as Lemma 4 in~\cite{mohammadi2021convergence}, we obtain the result. \qed
\end{proof}

The following theorem is an extension of Lemma 27 in Supplementary material of~\cite{fazel2018global} 
to output feedback control.
To prove this, we derive some inequalities in Appendix~\ref{sec:proof-theorem2}.

\begin{theorem}\label{thm:totalerror}
Let $\hat{\nabla} f(K)$ be defined as \eqref{eq:hatgrad}.
  For any $\varepsilon' > 0$ and $\delta > 0$, set $r = O(\varepsilon'), \tau = O(\log 1/\varepsilon'), N = O((\log 1/\delta)/\varepsilon'^4)$. Then, we have
  \begin{align}
    &\normF{\hat \grad f(K)-\grad f(K)} \leq \varepsilon' \label{eq:high-probability},
  \end{align}
  for any $K\in S(a)$,
  with probability greater than $1-\delta$.
\end{theorem}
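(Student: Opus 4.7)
I would decompose the estimation error into three independent sources, a truncation bias, a Monte Carlo error, and a smoothing bias, and bound each separately. Denote by $c^\infty(K', x_0)$ the exact infinite-horizon cost incurred by controller $K'$ from initial state $x_0$, so that $f(K') = \mathbb{E}_{x_0 \sim \mathcal D}[c^\infty(K', x_0)]$, and set $r' := r\sqrt{mp}$. Writing $U = \sqrt{mp}\,S$ with $S$ uniform on the unit sphere of $\mathbb{R}^{mp}$, the standard Stokes-type identity for randomized smoothing gives
\begin{align}
\grad f_{r'}(K) = \mathbb{E}\qty[\tfrac{1}{r} c^\infty(K+rU, x_0)\, U],
\end{align}
where $f_{r'}$ is the ball smoothing of $f$ at radius $r'$. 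One then has the decomposition $\hat\grad f(K) - \grad f(K) = A_{\mathrm{trunc}} + A_{\mathrm{MC}} + A_{\mathrm{smooth}}$, with the three terms given by (i) the empirical mean of $(c_i - c^\infty(K+rU_i, x_i(0)))U_i/r$, (ii) the deviation of the empirical mean of $c^\infty(K+rU_i, x_i(0))U_i/r$ from its mean $\grad f_{r'}(K)$, and (iii) $\grad f_{r'}(K) - \grad f(K)$.

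For $A_{\mathrm{trunc}}$, Lemma~\ref{lem:smallr} guarantees $K + rU_i \in S(2a)$ for $r \le r_0$, so the closed-loop state decays exponentially at a rate $\gamma = \gamma(a) > 0$ that is uniform on the sublevel set. Because $\norm{x_i(0)} \le P$ almost surely, the tail cost $\int_\tau^\infty (y^\top Q y + u^\top R u)\,dt$ is $O(e^{-2\gamma\tau})$ uniformly in $i$, hence $\normF{A_{\mathrm{trunc}}} = O(\sqrt{mp}\, e^{-2\gamma\tau}/r)$, which falls below $\varepsilon'/3$ for $\tau = O(\log 1/\varepsilon')$. For $A_{\mathrm{smooth}}$, I would invoke a local Lipschitz-smoothness bound $\normF{\grad f(K_1) - \grad f(K_2)} \le L\,\normF{K_1 - K_2}$ valid on $S(2a)$. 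Since $K + r'V \in S(2a)$ for every $V$ in the unit ball, writing $\grad f_{r'}(K) - \grad f(K) = \mathbb{E}_V[\grad f(K + r'V) - \grad f(K)]$ yields $\normF{A_{\mathrm{smooth}}} \le L r' = L\sqrt{mp}\, r = O(\varepsilon')$ under $r = O(\varepsilon')$.

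For $A_{\mathrm{MC}}$, the summands $c^\infty(K+rU_i, x_i(0))\, U_i / r$ are i.i.d., have mean $\grad f_{r'}(K)$, and are almost surely bounded in Frobenius norm by $B = O(\sqrt{mp}/r)$, because the infinite-horizon cost is uniformly bounded on $S(2a) \times \{\norm{x_0} \le P\}$. A vector Hoeffding inequality then yields $\normF{A_{\mathrm{MC}}} \le \varepsilon'/3$ with probability at least $1 - \delta$ provided $N = \Omega(B^2 \log(1/\delta)/\varepsilon'^2) = \Omega(\log(1/\delta)/\varepsilon'^4)$, which matches the stated rate (and explains why the exponent is $4$ rather than $2$: the $1/r^2$ factor absorbs the $1/\varepsilon'^2$ from $r = O(\varepsilon')$). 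Combining the three bounds by the triangle inequality yields \eqref{eq:high-probability}.

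The main obstacle is the appendix work that underpins the above constants: a uniform exponential-stability rate $\gamma$ on $S(2a)$, a uniform bound on $c^\infty$ over $S(2a) \times \{\norm{x_0} \le P\}$, and a local Lipschitz constant $L$ for $\grad f$ on $S(2a)$. Each of these reduces to perturbation analysis of the closed-loop Lyapunov equation on the compact-in-policy sublevel set $S(2a)$; extending the corresponding state-feedback estimates of \cite{fazel2018global} to the output-feedback setting, with quantitative constants tight enough that the prescribed $(r, \tau, N)$ really deliver $\varepsilon'$ error, is the technical heart of the argument.
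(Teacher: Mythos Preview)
Your proposal is correct and follows essentially the same approach as the paper: the proof in Appendix~\ref{sec:proof-theorem2} extends Lemma~27 of~\cite{fazel2018global} via exactly this three-term decomposition into smoothing bias (controlled by $r$ and the local $L$-smoothness on $S(2a)$), truncation bias (controlled by $\tau$ via the uniform exponential decay rate on $S(2a)$), and sampling error (controlled by $N$ via a matrix/vector concentration inequality with summands bounded by $O(1/r)$, which is what produces the $\varepsilon'^{-4}$ in $N$). Your identification of the technical core---uniform Lyapunov-based bounds on the decay rate, the cost, and the gradient Lipschitz constant over the sublevel set---is precisely what the appendix lemmas establish.
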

\begin{proof}
  See Appendix~\ref{sec:proof-theorem2}. \qed
\end{proof}

\subsection{Convergence properties}\label{sec:mfgp}
In this section, we prove the global convergence of the policy gradient method in the model free setting.
We show a model free control algorithm, Policy Gradient Projection, in Algorithm~\ref{alg:mfgp}.
The positive integer $T$ is the iteration number, $\alpha$ is the step size, $K_0$ is the initial point of the feedback gain $K$,
and $\proj$ is the projection onto $\Omega$ with respect to the Frobenius norm.
The termination condition $\normF{K_{i+1}-K_i}\leq \varepsilon\alpha$ is added for technical reasons.
\begin{algorithm}
  \caption{Policy Gradient Projection}
  \label{alg:mfgp}
  \begin{algorithmic}[1]
    \REQUIRE{$T > 0, \alpha > 0, \varepsilon > 0, K_0\in \Omega, N > 0, r>0, \tau>0$}
    \FOR{$i=0$ to $T-1$}
    \STATE{Calculate $\hat \grad f(K_i)$ using Algorithm \ref{alg:gradest1}.}
    \STATE{$K_{i+1} := \proj\qty(K_i-\alpha \hat \grad f(K_{i}))$.}
    \IF{$\normF{K_{i+1}-K_i}\leq \varepsilon\alpha$}
    \RETURN $K_i$
    \ENDIF
    \ENDFOR
    \RETURN $K_T$
  \end{algorithmic}
\end{algorithm}

Lemma~\ref{lem:projection} describes the property of orthogonal projections,
which is essential to our convergence analysis.
\begin{lemma}
  \label{lem:projection}
  Let $\proj:\R^{m\times p}\to \Omega$ be an orthogonal projection onto $\Omega$.
  For any $x\in \Omega$ and $y\in \R^{m\times p}$, we have
%  \begin{align}
  $\q<x-\proj(y), y-\proj(y)>  \leq 0$.
%  \end{align}
\end{lemma}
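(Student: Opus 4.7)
The statement is the familiar variational characterization of the projection onto a convex set, with inner product taken in the Frobenius sense, so the plan is to invoke the convexity of $\Omega$ (which the paper's setup ensures, since otherwise the projection would not even be well defined as a single-valued map) and argue via a first-order optimality condition along a line segment inside $\Omega$.

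First I would recall that, by definition, $\Pi_\Omega(y)$ minimizes $z\mapsto \tfrac{1}{2}\|z-y\|_F^2$ over $z\in\Omega$. Given any competitor $x\in\Omega$, convexity of $\Omega$ lets me form the segment $z_t := (1-t)\Pi_\Omega(y) + t x = \Pi_\Omega(y) + t(x-\Pi_\Omega(y))\in\Omega$ for every $t\in[0,1]$. The optimality of $\Pi_\Omega(y)$ then yields
\begin{equation*}
\|\Pi_\Omega(y)-y\|_F^2 \;\leq\; \|z_t - y\|_F^2.
\end{equation*}

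Next I would expand the right-hand side using the Frobenius inner product,
\begin{equation*}
\|z_t-y\|_F^2 = \|\Pi_\Omega(y)-y\|_F^2 + 2t\,\langle x-\Pi_\Omega(y),\,\Pi_\Omega(y)-y\rangle + t^2\|x-\Pi_\Omega(y)\|_F^2,
\end{equation*}
subtract the common term, divide by $t>0$, and let $t\downarrow 0$. This gives $\langle x-\Pi_\Omega(y),\,\Pi_\Omega(y)-y\rangle \geq 0$, which is exactly the desired inequality after flipping the sign of the second factor.

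The only genuinely delicate point is the appeal to convexity of $\Omega$; if $\Omega$ were merely closed but nonconvex, the inequality could fail and $\Pi_\Omega$ could even be multi-valued. I would therefore briefly justify (or cite from the problem setup in Section~\ref{sec:model-free}) that the feasible set $\Omega$ over which the projection is taken is convex, after which the argument above is entirely routine.
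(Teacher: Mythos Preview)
Your argument is correct and is precisely the standard proof of the projection theorem for closed convex sets. The paper itself does not give a proof at all: it simply cites Theorem~6.41 of \cite{beck2017first}, whose proof is essentially the variational argument you wrote out. Your remark that convexity of $\Omega$ is the crucial hypothesis is well taken; the paper relies on it implicitly (e.g., later invoking that projections onto convex sets are contractive), so it is worth stating explicitly even though the paper does not.
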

\begin{proof}
  See Theorem 6.41 of \cite{beck2017first}. \qed
\end{proof}

The following definition is required to show our main result.
\begin{definition}
  \label{def:epsst}
  For positive constants $\alpha$ and $\varepsilon$,
  $K$ is called an $\varepsilon$-stationary point if $\normF{G_\alpha(K)} \leq \varepsilon$, where $K^+ := \proj(K-\alpha\grad f(K))$ and $G_\alpha(K) := \frac{1}{\alpha}(K^+-K)$.
\end{definition}

The following theorem is a main result, which is an extension of Theorem 4.2 in~\cite{fatkhullin2021optimizing} to the model-free and constrained problems. 
The proof is based on the proof for projected gradient method without gradient error for $L$-smooth functions~\cite{beck2017first}. However, in the presence of gradient errors, the termination condition and some extra arguments to bound the effect of the difference between the true and estimated gradients are required.

\begin{theorem}\label{thm:modelfreeconvergence}
  Assume that the constants $N, r, \tau$ satisfies the condition in Theorem~\ref{thm:totalerror} with $\varepsilon' = \lambda \varepsilon$
  for the given constants $0<\lambda<1, \varepsilon > 0, \delta > 0$.
  Let $\{K_i\}_{i=0}^{T'}$ be the sequence generated by the Algorithm~\ref{alg:mfgp},
  where $T'$ is the total number of iterations, which can be different from $T$ due to the terminate condition.
  For a step size $\alpha \in \qty(0, \frac{2(1-\lambda)}{L})$,
  where $L$ is the Lipschitz constant of $\grad f$ on $S_0$,
  we have the following result with probability greater than $1-T\delta$.
  \begin{itemize}
    \item The sequence $\{K_i\}_{i=0}^{T'}$ remains in $S$ and $\{f(K_i)\}_{i=0}^{T'}$ is strictly decreasing. That is, for any $0\leq i\leq T'-1$,
          \begin{align}
            f(K_{i+1}) < f(K_i).\label{eq:tmp4}
          \end{align}
    \item If $T > \frac{f(K_0)}{\varepsilon^2\alpha^2\qty(\frac{1-\lambda}{\alpha}-\frac{L}{2})}$,
          $K_{T'}$ is a $(1+\lambda)\varepsilon$-stationary point.
  \end{itemize}
\end{theorem}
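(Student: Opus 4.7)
The plan is to first apply Theorem~\ref{thm:totalerror} at each of the $T$ iterations with $\varepsilon'=\lambda\varepsilon$ and take a union bound, yielding $\normF{\hat\grad f(K_i)-\grad f(K_i)}\leq\lambda\varepsilon$ simultaneously for every $i=0,\dots,T-1$ with probability at least $1-T\delta$. All remaining arguments will condition on this high-probability event and are purely deterministic.

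Next I would derive a noisy one-step descent inequality. Applying Lemma~\ref{lem:projection} with $x=K_i$ and $y=K_i-\alpha\hat\grad f(K_i)$ gives $\q<\hat\grad f(K_i),K_{i+1}-K_i>\leq -\tfrac{1}{\alpha}\normF{K_{i+1}-K_i}^2$. Splitting $\grad f(K_i)=\hat\grad f(K_i)+(\grad f(K_i)-\hat\grad f(K_i))$, bounding the error term by Cauchy--Schwarz, and plugging into the $L$-smooth descent lemma on $S$ yields
\[
f(K_{i+1})\leq f(K_i)-\qty(\tfrac{1}{\alpha}-\tfrac{L}{2})\normF{K_{i+1}-K_i}^2+\lambda\varepsilon\,\normF{K_{i+1}-K_i}.
\]
For the first bullet I proceed by induction on $i$: as long as the loop continues, $\normF{K_{i+1}-K_i}>\varepsilon\alpha$, hence $\lambda\varepsilon\,\normF{K_{i+1}-K_i}<\tfrac{\lambda}{\alpha}\normF{K_{i+1}-K_i}^2$, and the step-size condition $\alpha<2(1-\lambda)/L$ makes the coefficient $\tfrac{1-\lambda}{\alpha}-\tfrac{L}{2}$ strictly positive, producing the strict decrease $f(K_{i+1})<f(K_i)$. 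Because the sublevel set $\{K:f(K)\leq f(K_0)\}$ is contained in $S$, this closes the induction and the iterates remain in $S$.

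For the second bullet, suppose termination fires at step $T'$: then $\normF{K_{T'+1}-K_{T'}}\leq\varepsilon\alpha$ with $K_{T'+1}=\proj(K_{T'}-\alpha\hat\grad f(K_{T'}))$. Setting $K_{T'}^+:=\proj(K_{T'}-\alpha\grad f(K_{T'}))$ and using non-expansiveness of $\proj$,
\[
\normF{K_{T'}^+-K_{T'+1}}\leq\alpha\normF{\grad f(K_{T'})-\hat\grad f(K_{T'})}\leq\alpha\lambda\varepsilon,
\]
so the triangle inequality gives $\normF{G_\alpha(K_{T'})}=\tfrac{1}{\alpha}\normF{K_{T'}^+-K_{T'}}\leq(1+\lambda)\varepsilon$. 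If termination never fires within $T$ steps, telescoping the strict descent yields $f(K_T)<f(K_0)-T\varepsilon^2\alpha^2\qty(\tfrac{1-\lambda}{\alpha}-\tfrac{L}{2})$, which for $T$ above the stated threshold contradicts $f(K_T)\geq 0$; hence termination must have occurred, and the preceding paragraph completes the claim.

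The subtlest point I expect is maintaining the invariant $K_i\in S$ throughout the run so that both Theorem~\ref{thm:totalerror}'s error bound and the $L$-smooth descent inequality remain valid along every segment traversed by the algorithm. This forces $S$ (equivalently, the constant $a$ in Assumption~\ref{assume:gradest}) to be chosen with enough slack around the sublevel set $\{f\leq f(K_0)\}$, which is precisely the justification of the second bullet of Assumption~\ref{assume:gradest} promised at the start of this section.
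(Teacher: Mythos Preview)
Your proposal is correct and follows essentially the same route as the paper's proof: the projection inequality (Lemma~\ref{lem:projection}) plus $L$-smoothness give a one-step descent bound, the termination condition $\normF{K_{i+1}-K_i}>\varepsilon\alpha$ is used to convert the additive $\lambda\varepsilon\,\normF{K_{i+1}-K_i}$ error into $\tfrac{\lambda}{\alpha}\normF{K_{i+1}-K_i}^2$, non-expansiveness of $\proj$ turns the noisy stopping criterion into the $(1+\lambda)\varepsilon$-stationarity bound, and telescoping forces $T'<T$. The only stylistic difference is that the paper handles the ``segment stays in $S$'' issue via an explicit continuation parameter $t^*:=\max\{t\geq 0:\ f(K+tv)\leq f(K)\}$ (showing $t^*\geq \tfrac{2(1-\lambda)}{L\alpha}>1$) rather than by appealing to slack in $S$ as you do in your final paragraph.
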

\begin{proof}
  We define $G'_\alpha(K) = \frac{1}{\alpha}{v}$, where $v:=\hat{K}^+-K$ and $\hat{K}^+ := \proj(K-\alpha\hat \grad f(K))$.
  First, we show that if $\normF{G'_\alpha(K)} \leq \varepsilon$ and \eqref{eq:high-probability} holds,
  $K \in S_0$ is a $(1+\lambda)\varepsilon$-stationary point.
  \begin{align}
    \normF{G_\alpha(K)} & \leq \normF{G'_\alpha(K)}+\frac{1}{\alpha}\normF{K^+-\hat{K}^+} \\
                & \leq \varepsilon+\normF{\grad f(K)-\hat\grad f(K)}                \leq (1+\lambda)\varepsilon.
  \end{align}
  The second inequality holds, because projections onto convex sets are contractive,
  and the last inequality follows from~\eqref{eq:high-probability}.

  Next, we show $\normF{G'_\alpha(K_{T'})} \leq \varepsilon$ with high probability.
  The termination condition ensures $\normF{G'_\alpha(K_i)} > \varepsilon$ for $i = 0, \dots, T'-1$.
  Assume that $\normF{G'_\alpha(K)} > \varepsilon$ and Eq.~\eqref{eq:high-probability} with $\varepsilon' = \lambda \varepsilon$ holds for $K\in S_0$.
    We define $K_t:=K+tv$ and $t^* := \max\qty{t > 0 \mid f(K_{t'}) \leq f(K_0)\quad (0\leq \forall t' \leq t)}$.
  Then, Lemma~\ref{lem:projection} yields
  \begin{align}
    \q<\hat \grad f(K), v> & \leq -\frac{1}{\alpha}\normF{v}^2\label{eq:mf-proj}
  \end{align}
  and $L$-smoothness of $f(K)$ on $S_0$ implies
  \begin{align}
    f(K_t)-f(K) & \leq \q<\grad f(K), K_t-K>+\frac{L}{2}\normF{K_t-K}^2\\
    &=t\q<\grad f(K), v>+\frac{Lt^2}{2}\normF{v}^2.\label{eq:mf-L-smooth}
  \end{align}
  
  By adding both sides of the equations \eqref{eq:mf-proj} multiplied by $t$ and \eqref{eq:mf-L-smooth}, we obtain
  \begin{align}
    f(K_t)&-f(K)% \\
%    & \leq  \qty(\frac{Lt^2}{2}-\frac{t}{\alpha}+t\frac{\normF{\grad f(K)-\hat \grad f(K)}}{\normF{v}})\normF{v}^2 \\
                %& <
 <\qty(\frac{Lt^2}{2}-t\frac{1-\lambda}{\alpha})\normF{v}^2.\label{eq:mf-sufficient-decrease}
  \end{align}
  where we used
%  The last inequality follows from~
\eqref{eq:high-probability} and $G'_\alpha(K)=\normF{v}/\alpha > \varepsilon$.
  For $t = t^*$, we have
  \begin{align}
      0 &= f(K_{t^*})-f(K)\leq \qty(\frac{L{t^*}^2}{2}-{t^*}\frac{1-\lambda}{\alpha})\normF{v}^2
  \end{align}
  Since $\norm{v} > 0$, we have $t^* \geq \frac{2(1-\lambda)}{L\alpha}\geq 0$.
  Therefore,~\eqref{eq:mf-sufficient-decrease} holds for $t = 1$.
  Eq.~\eqref{eq:mf-sufficient-decrease} with $K=K_i$ and $t=1$ leads us to
  \begin{align}
    f(K_{i+1})-f(K_i) & < \qty(\frac{L}{2}-\frac{1-\lambda}{\alpha})\normF{K_{i+1}-K_i}^2 <0,
  \end{align}
  because $\alpha \in (0, \frac{2(1-\lambda)}{L})$.
  Thus,~\eqref{eq:tmp4} holds for any $0\leq i\leq T'-1$.

  If $T' < T$, the termination condition ensures $\normF{G'_\alpha(K_{T'})} \leq \varepsilon$.
  Therefore, it suffices to show $T' < T$.
  Since $T' \leq T$ by definition, suppose that $T' = T$.
  Then, we obtain
  \begin{align}
    f(K_0)  \geq f(K_0)-f(K_{T})%=\sum_{i = 0}^{T-1} f(K_i)-f(K_{i+1})   \\
          % & \geq\qty(\frac{1-\lambda}{\alpha}-\frac{L}{2}) \sum_{i = 0}^{T-1} \normF{K_{i+1}-K_i}^2  \\
          \geq \qty(\frac{1-\lambda}{\alpha}-\frac{L}{2})T\varepsilon^2\alpha^2.
  \end{align}
  The assumption $T > \frac{f(K_0)}{\varepsilon^2\alpha^2\qty(\frac{1-\lambda}{\alpha}-\frac{L}{2})}$
  yields $f(K_0)>f(K_0)$, which is a contradiction. Thus, $T'<T$, that is, $\normF{G'_\alpha(K_{T'})}\leq \varepsilon$.
  From Theorem~\ref{thm:totalerror}, the probability that Eq. \eqref{eq:high-probability} holds for $K = K_i(i=0, \dots, T')$ is greater than
  $(1-\delta)^{T'+1}\geq 1-T\delta$.
  This completes the proof.\qed
\end{proof}

The convergence rate $T = O(1/\varepsilon^2)$ is essentially the same as the rate of projected gradient method without gradient error for $L$-smooth functions~\cite{beck2017first}. For sample complexity, the total number of samples $TN = O(\log(1/\delta)/\varepsilon^6)$ is worse than $O(\log(1/\delta)/\varepsilon^4)$ of the zeroth-order proximal gradient descent with two points evaluation in~\cite{ghadimi_mini-batch_2016} since we cannot evaluate the two cost function values with two different feedback gains for the same initial state due to the randomness of the initial state. Note that the same rate to ours was obtained for discrete-time state feedback LQR problems in the model free setting~\cite{hambly2021policy} but not known for the model free and output feedback setting.

\subsection{Variance reduction}
Policy gradient methods tend to suffer from a large variance, which leads to slow learning~\cite{grondman2012survey}.
The use of baseline is one of the variance reduction techniques for policy gradient methods~\cite{grondman2012survey}.
State-depending functions are often used as a baseline, because it does not add any bias to the estimated gradient~\cite{degris2012model-free, mnih2016asynchronous}.
In this section, we propose to use the finite horizon cost function as a baseline and show its optimality.

For a baseline function $b(x)$, the estimated gradient $\bar \grad f$ is defined as
\begin{align}
    \bar \grad f(K)  &:= \frac{1}{rN}\sum_{i=1}^N (\tilde f_\tau(K+rU_i;x_i(0))-b(x_i(0)))U_i,\label{eq:grad-baseline-b}
\end{align}
where the finite horizon cost function is defined as
\begin{align}
    \tilde f_\tau(K;x(0)) := \int_0^\tau y^\top(t) ( Q + K^\top RK)y(t)dt\label{eq:ftau},
\end{align}
which satisfies $\lim_{\tau \to \infty} \tilde f_\tau(K;x(0)) = \tilde f(K;x(0))$.
Because 
\begin{align}
    \hat \grad f(K) = \frac{1}{rN}\sum_{i=1}^N \tilde f_{\tau}(K+r U_i;x_i(0))U_i,
\end{align}
the estimated gradient $\bar \grad f(K)$ satisfies
\begin{align}
  E\qty[\bar \grad f(K)] & = E_{x_i(0)\sim \mathcal{D}, U_i \sim \mathcal{S}}\qty[\hat \grad f(K)] \\
  & \quad - \frac{1}{rN}\sum_{i=1}^N E_{x_i(0)\sim \mathcal{D}, U_i \sim \mathcal{S}}\qty[b(x_i(0))U_i] \\
                   %      & = E_{x_i(0)\sim \mathcal{D}, U_i\sim \mathcal{S}}\qty[\hat \grad f(K)] \\
   % &\quad - \frac{1}{rN}\sum_{i=1}^N E_{x_i(0)\sim \mathcal{D}, U_i\sim \mathcal{S}}\qty[b(x_i(0))]E[U_i] \\
                         & = E_{x_i(0)\sim \mathcal{D}, U_i\sim \mathcal{S}}\qty[\hat \grad f(K)].
\end{align}
The second equality holds from the assumption that $U_i$ and $x_i(0)$ are independent, and
% the last equality follows from
$E[U_i] = 0$.
Thus, the bias in $\bar \grad f$ is the same as the one in $\hat \grad f$.

In terms of the variance, the baseline $b(x(0)) = \tilde f_\tau(K;x(0))$ is almost optimal for small $r$.
\begin{theorem}\label{thm:baseline}
  For $a > 0$, $r \leq r_0$, $\tau > 0$, and $K\in S(a)$,
  the optimal baseline $b^*(x(0))$ which minimizes the variance of the estimated gradient~\eqref{eq:grad-baseline-b} is given by
  \begin{align}
    b^*(x(0)) = E_{U\sim \mathcal{S}}\qty[\tilde f_\tau(K+rU;x(0))],\label{eq:b-star}
  \end{align}
  where $\mathcal{S}$ is defined in Algorithm~\ref{alg:gradest1}
  and
  \begin{align}
    \lim_{r\to 0}b^*(x(0)) = \tilde f_\tau(K;x(0)).\label{eq:limit}
  \end{align}
\end{theorem}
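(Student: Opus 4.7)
My plan is to handle the two claims separately: first derive the optimal baseline by a pointwise second-moment argument, then obtain the limiting identity by a dominated-convergence style argument using Lemma~\ref{lem:smallr} to get a uniform bound.

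For the first part, since the samples $(U_i, x_i(0))$ are i.i.d.\ and $E[\bar\grad f(K)]$ does not depend on $b$ (as shown just before the theorem), minimizing the variance of $\bar\grad f(K)$ amounts to minimizing the second moment of a single summand, namely
\begin{align}
  \Phi(b) := E\qty[\normF{\tfrac{1}{r}(\tilde f_\tau(K+rU;x(0))-b(x(0)))U}^2]
          = \tfrac{mp}{r^2}\, E\qty[(\tilde f_\tau(K+rU;x(0))-b(x(0)))^2],
\end{align}
where I used $\normF{U}^2 = mp$. Conditioning on $x(0)$ and using independence of $U$ and $x(0)$, the inner expectation becomes $E_{U\sim\mathcal S}[(\tilde f_\tau(K+rU;x(0))-b(x(0)))^2]$, which is a scalar quadratic in $b(x(0))$ minimized at the conditional mean. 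This gives
\begin{align}
  b^*(x(0)) = E_{U\sim\mathcal S}\qty[\tilde f_\tau(K+rU;x(0))],
\end{align}
which is exactly~\eqref{eq:b-star}.

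For the limit~\eqref{eq:limit}, I would show convergence pointwise in $U$ and then exchange limit and expectation. For each fixed $U$ with $\norm{U}=\sqrt{mp}$, Lemma~\ref{lem:smallr} ensures $K+rU\in S(2a)$ for $r\leq r_0$, so the closed-loop system is stable with an exponential decay rate uniform in $U$ and in $r\le r_0$. This yields a uniform bound $\tilde f_\tau(K+rU;x(0))\leq M(x(0))$ on the integrand (e.g.\ by bounding $\norm{y(t)}^2$ by $Ce^{-\beta t}\norm{x(0)}^2$ with $C,\beta$ depending only on $2a$). Pointwise in $U$, continuity of $\tilde f_\tau(\cdot;x(0))$ in $K$ gives $\tilde f_\tau(K+rU;x(0))\to \tilde f_\tau(K;x(0))$ as $r\to 0$, and $\tilde f_\tau(K;x(0))\to \tilde f(K;x(0))$ as $\tau\to\infty$ as noted after~\eqref{eq:ftau}. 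Applying dominated convergence with the uniform bound $M(x(0))$ allows me to move the limit past $E_{U\sim\mathcal S}$ and recover $\tilde f(K;x(0))$.

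The only mildly delicate step is establishing the uniform bound on $\tilde f_\tau(K+rU;x(0))$ over $\{r\in(0,r_0],\ \normF{U}=\sqrt{mp}\}$; but this is immediate once Lemma~\ref{lem:smallr} places all perturbed gains in the common stability set $S(2a)$, on which one has uniform exponential decay of the output trajectory by the standard Lyapunov argument already used elsewhere in the paper. The first part, by contrast, is a routine conditional-variance decomposition, and the independence hypothesis in Assumption~\ref{assume:gradest} is exactly what makes the conditional minimizer attainable by a function of $x(0)$ alone.
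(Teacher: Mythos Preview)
Your proposal is correct and follows essentially the same route as the paper: reduce variance minimization to minimizing the second moment of a single summand (using that the mean term is independent of $b$), pull out $\normF{U}^2=mp$, and identify the pointwise minimizer as the conditional mean over $U$. Your treatment of the limit~\eqref{eq:limit} via Lemma~\ref{lem:smallr} and dominated convergence is more explicit than the paper's, which simply appeals to continuity of $\tilde f_\tau(K+rU;x(0))$.
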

\begin{proof}
  % Let $b(x)$ be any function of the initial state $x$.
  % Using $b(x)$ as a baseline, the estimated gradient is given by
    % $\bar \grad f(K) := \frac{1}{rN}\sum_{i=1}^N \qty(\tilde % f_\tau(K+rU_i;x_i(0))-b(x_i(0)))U_i$.
  Since $\qty{x_i(0)}_{i=1}^N, \qty{U_i}_{i=1}^N$ are assumed to be independent, $\qty{f_\tau(K;x_i(0))-b(x_i(0)))U_i}_{i=1}^N$ are independent and we have
  \begin{align}
    &V\qty[\bar \grad f(K)] \label{eq:baseline-variance}\\
                           &= \frac{1}{r^2N}\left[E_{x(0)\sim \mathcal{D}, U\sim \mathcal{S} }\qty[\normF{(\tilde f_\tau(K+rU;x(0))-b(x(0)))U}^2]\right.        \\
                           & \quad\quad-\left.\normF{E_{x(0)\sim \mathcal{D}, U\sim \mathcal{S} }[(\tilde f_\tau(K+rU;x(0))-b(x(0)))U]}^2\right]\nonumber
  \end{align}
  for any baseline $b(x)$.
  Because we have assumed that $x(0)$ and $U$ are independent and $E_{U\sim \mathcal{S}}[U] = 0$,
  \begin{align}
      E_{x(0)\sim \mathcal{D}, U\sim \mathcal{S}}\qty[b(x(0))U] = E_{x(0) \sim \mathcal{D}}\qty[b(x(0))]E_{U\sim \mathcal{S}}[U] = 0.
  \end{align}
  Thus, the expectation $E[(\tilde{f}_\tau(K+rU;x(0))-b(x(0)))U]$ is independent of the choice of a baseline $b(x)$,
  that is,
  \begin{align}
& E_{x(0)\sim \mathcal{D}, U \sim \mathcal{S}}\qty[(\tilde f_\tau(K+rU;x(0))-b(x(0)))U] \\ =& E_{x(0)\sim \mathcal{D}, U \sim \mathcal{S}}\qty[\tilde f_\tau(K+rU;x(0))U].
  \end{align}
  Then, the second term in \eqref{eq:baseline-variance} is independent of the choice of $b(x)$ and we just need to minimize the first term in \eqref{eq:baseline-variance},
  \begin{align}
    & E_{x(0) \sim \mathcal{D}, U\sim \mathcal{S}}\qty[\norm{\qty(\tilde f_\tau(K+rU;x(0))-b(x(0)))U}^2_F] \\
     =& mp E_{x(0)\sim \mathcal{D}}\qty[E_{U\sim S}\qty[\qty(\tilde f_\tau(K+rU;x(0))-b(x(0)))^2]].
  \end{align}
  Since the expectation $E_{U\sim \mathcal{S}}\qty[\tilde f_\tau(K+rU;x(0))]$ minimizes the mean squared error $E_{U\sim S}\qty[\qty(\tilde f_\tau(K+rU;x(0))-b(x(0)))^2]$ for any $x(0)$, 
  the optimal baseline is given by \eqref{eq:b-star}.
  Eq.~\eqref{eq:limit} follows from the continuity of $\tilde f_\tau(K+rU;x(0))$.\qed
\end{proof}

Based on Theorem~\ref{thm:baseline}, we propose to use $\tilde f_\tau(K;x(0))$ as a baseline.
In the model free setting, $\tilde f_\tau(K;x(0))$ cannot be computed directly for a given $x(0)$ in the same manner as $c_i$ because we cannot specify the initial state in the estimation procedure of $c_i$.
Therefore, we provide the estimation procedure for $\tilde f_\tau(K;x(0))$ in Algorithm~\ref{alg:estimate-f}. In the following, we define $\bar{y}(t;x(0)) = [y(t)^\top, y(t+h_1)^\top, \dots, y(t+h_{D-1})^\top]^\top \in \R^{pD}(0\leq h_1<h_2<\dots<h_{D-1}=T)$.
\begin{algorithm}
  \caption{Estimate $\tilde f_{\tau}(K;x(0))$}
  \label{alg:estimate-f}
  \begin{algorithmic}[1]
    \REQUIRE{$K\in S, x(0) \in \R^n, s > 0$.}
    \FOR{$i=1$ to $n(n+1)/2$}
    \STATE{Simulate system~\eqref{eq:system} for $x(0) = x_i(0) \sim \mathcal D$ until time $s$.}
    \ENDFOR
    \STATE{Solve the following equations for $\hat P(X)$.
    \begin{align} 
        & \bar y(0;x_i(0))^\top \hat P(K)\bar y(0;x_i(0)) \\
        & = \tilde f_s(K;x_i(0)) + \bar y(s;x_i(0))^\top \hat P(K)\bar y(s;x_i(0)),\label{eq:bellmaneq}
    \end{align}}
    where $\tilde f_s$ is defined as \eqref{eq:ftau}.
    \STATE{Define $\hat f_\tau(K;x(0)) = \bar y(0;x(0))^\top \hat P(K)\bar y(0;x(0)) - \bar y(\tau;x(0))^\top \hat P(K)\bar y(\tau;x(0))$.}
    \RETURN $\hat f_\tau(K;x(0))$.
  \end{algorithmic}
\end{algorithm}
The following theorem ensures that the estimated cost $\hat f_\tau(K;x(0))$ is equal to $\tilde f_\tau(K;x(0))$.
\begin{theorem}\label{thm:estimate-f}
  For any $T > 0$, $D > 2(n-1) + \frac{T}{2\pi} \beta$, where $\beta = 2(\norm{A}_2 + \norm{B}_2\norm{C}_2\kappa(a))$,
  if $x_j(0)x_j(0)^\top - x_j(s)x_j(s)^\top(j=1, \dots, \frac{n(n+1)}{2})$
  are linearly independent on $\mathbb{S}^n$ for $s$ in Algorithm~\ref{alg:estimate-f}, then $\hat f_\tau(K;x(0)) = \tilde f_\tau(K;x(0))$ for any $K \in S$ and $x(0)$.
\end{theorem}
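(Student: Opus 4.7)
The natural reduction is to express the stacked output $\bar y(t;x(0))$ in terms of the closed-loop trajectory. Setting $\Phi(t):=e^{(A-BKC)t}$, the output feedback law $u=-Ky$ yields $x(t)=\Phi(t)x(0)$, so
\begin{align*}
\bar y(t;x(0))=M\,x(t),\qquad M:=\begin{bmatrix} C\\ C\Phi(h_1)\\ \vdots \\ C\Phi(h_{D-1})\end{bmatrix}\in\R^{pD\times n}.
\end{align*}
Introducing the reduced unknown $P':=M^\top \hat P M\in\mathbb{S}^n$, the $n(n+1)/2$ equations \eqref{eq:bellmaneq} collapse to
\begin{align*}
\operatorname{tr}\!\left(P'\bigl(x_j(0)x_j(0)^\top-x_j(s)x_j(s)^\top\bigr)\right)=\tilde f_s(K;x_j(0)),\quad j=1,\dots,\tfrac{n(n+1)}{2}.
\end{align*}
Since $K\in S(a)$ makes $A-BKC$ Hurwitz, the infinite-horizon cost admits the quadratic form $\tilde f(K;x)=x^\top P(K)x$ for the unique Lyapunov solution $P(K)$, and the Bellman-type identity $\tilde f(K;x(0))=\tilde f_s(K;x(0))+\tilde f(K;x(s))$ shows that $P(K)$ solves exactly this same linear system.

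Next I would invoke the linear-independence hypothesis on $\{x_j(0)x_j(0)^\top-x_j(s)x_j(s)^\top\}_{j=1}^{n(n+1)/2}$ in $\mathbb{S}^n$: it forces the reduced system to have a unique solution in $\mathbb{S}^n$, so $P'=P(K)$, and a direct substitution gives
\begin{align*}
\hat f(K;x(0))=\bar y(0;x(0))^\top \hat P\,\bar y(0;x(0))=x(0)^\top M^\top \hat P M\, x(0)=x(0)^\top P(K) x(0)=\tilde f(K;x(0))
\end{align*}
for every $x(0)$. For this reduction to be meaningful I must also verify that a symmetric $\hat P\in\R^{pD\times pD}$ satisfying \eqref{eq:bellmaneq} actually exists; because $P(K)$ itself always solves the reduced system on $\mathbb{S}^n$, the explicit lift $\hat P:=(M^\dagger)^\top P(K)\,M^\dagger$ satisfies $M^\top \hat P M=P(K)$ provided that $M$ has full column rank $n$.

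The crux of the proof is therefore to show $\operatorname{rank} M=n$ under the hypothesis $D>2(n-1)+T\beta/(2\pi)$. Given $v\in\R^n$ with $Mv=0$, I would examine the real-analytic function $\psi(t):=C\Phi(t)v$, which in that case vanishes at the $D$ distinct points $0,h_1,\dots,h_{D-1}\in[0,T]$. If $\psi\equiv 0$, repeated differentiation at $t=0$ gives $C(A-BKC)^k v=0$ for all $k\ge 0$; observability of the pair $(C,A-BKC)$, inherited from observability of $(C,A)$ and preserved under output feedback, then forces $v=0$. Otherwise some scalar component of $\psi$ is a nontrivial exponential polynomial whose exponents are among the eigenvalues of $A-BKC$, and a Voorhoeve/Tijdeman-type bound caps its number of real zeros in $[0,T]$ by $2(n-1)+T\Omega/(2\pi)$, where the spread $\Omega$ of the imaginary parts of these eigenvalues satisfies $\Omega\le 2\|A-BKC\|_2\le\beta$. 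The strict inequality on $D$ then contradicts the existence of $D$ zeros, so $Mv=0$ forces $v=0$.

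The main obstacle will be this last zero-counting estimate: matching the coefficient $2$ in $2(n-1)$ against the precise exponential-polynomial bound, while correctly accounting for Jordan-block multiplicities of $A-BKC$ and for the vector-valued nature of $\psi$, is where all of the quantitative work lies. Once $\operatorname{rank} M=n$ is in hand, every remaining ingredient is elementary linear algebra together with the Bellman identity.
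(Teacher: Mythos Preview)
Your reduction is exactly the right one and almost certainly coincides with the paper's appendix argument: write $\bar y(t;x(0))=Mx(t)$ with the sampled observability matrix $M=[C^{\top},(C\Phi(h_1))^{\top},\dots,(C\Phi(h_{D-1}))^{\top}]^{\top}$, collapse \eqref{eq:bellmaneq} to a linear system on $\mathbb{S}^n$ for $P':=M^{\top}\hat P(K)M$, observe via the Bellman identity that $P(K)$ solves this reduced system, and use the linear-independence hypothesis to force $P'=P(K)$, whence $\hat f(K;x(0))=x(0)^{\top}P(K)x(0)=\tilde f(K;x(0))$. The only substantive step left is existence of some $\hat P(K)$, for which full column rank of $M$ (so that the lift $(M^{\dagger})^{\top}P(K)M^{\dagger}$ is available) is the clean sufficient condition; this is where the hypothesis on $D$ enters.

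Two remarks on the details. First, in your Case~1 you appeal to observability of $(C,A)$, which is indeed preserved under output feedback since $A-BKC=A+LC$ with $L=-BK$; just make sure this is among the standing assumptions of the paper (it typically is for the output-feedback LQR setup). If it is not, the argument still goes through: $\psi\equiv 0$ means $v$ lies in the unobservable subspace of $(C,A-BKC)$, hence $\tilde f(K;v)=0$ and $v\in\ker P(K)$, which is already enough for consistency of \eqref{eq:bellmaneq} without insisting on $\ker M=\{0\}$.

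Second, your worry about ``matching the coefficient $2$ in $2(n-1)$'' is unnecessary. The oscillation bound you invoke for a nonzero scalar exponential polynomial with $n$ frequencies (counted with Jordan multiplicities) on a real interval of length $T$ gives at most $(n-1)+\dfrac{T}{2\pi}\Omega$ zeros, and your estimate $\Omega\le 2\lVert A-BKC\rVert_2\le\beta$ is correct because every eigenvalue satisfies $\lvert\lambda\rvert\le\lVert A-BKC\rVert_2$ and $\lVert K\rVert_2\le\kappa(a)$ on $S(a)$. This yields the \emph{stronger} sufficient condition $D>(n-1)+\dfrac{T}{2\pi}\beta$, so the paper's hypothesis $D>2(n-1)+\dfrac{T}{2\pi}\beta$ is a fortiori enough. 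You do not need to reproduce the factor $2$; you only need to show the stated $D$ exceeds your zero count, which it does.
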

\begin{proof}
  See appendix~\ref{proof:thm:estimate-f}.
\end{proof}

The following theorem shows that
the assumption of Theorem~\ref{thm:estimate-f} holds with probability 1.

\begin{theorem}
If the distribution $\mathcal {D}$ has a probability density function, $x_j(0)x_j(0)^\top - x_j(s)x_j(s)^\top(j=1, \dots, \frac{n(n+1)}{2})$ are linearly independent on $\mathbb{S}^n$ with probability 1.
\end{theorem}
\begin{proof}
Let $v_i = x_i(0)x_i(0)^\top - x_i(s)x_i(s)^\top$ and $V_m$ be a linear subspace generated by $\qty{v_i}_{i=1}^m$ ($1 \leq m \leq \frac{n(n+1)}{2}$).
For $m < \frac{n(n+1)}{2}$, $V_m$ is a proper subspace of $\mathbb{S}^n$ and there exists $\tilde v_m \neq O \in \mathbb{S}^n$ orthogonal to $V_m$.
Since $x_i(s) = e^{A_Ks}x_i(0)$, we have
\begin{align}
    &\langle v_{m+1}, \tilde v_m\rangle\\
    &= \langle x_{m+1}(0)x_{m+1}(0)^\top - e^{A_Ks}x_{m+1}(0)x_{m+1}(0)^\top e^{A_K^\top s}, \tilde v_m\rangle\\
    %&= x_{m+1}(0)^\top \tilde v_m x_{m+1}(0) - x_{m+1}(0)^\top e^{A_K^\top s}\tilde v_m e^{A_Ks}x_{m+1}(0)\\
    &= x_{m+1}(0)^\top \tilde v'_m x_{m+1}(0),
\end{align}
    where $\tilde v'_m := \tilde v_{m} - e^{A^\top_Ks}\tilde v_{m}e^{A_Ks}$.
    Note $\tilde v'_m \neq O$, because $A_K$ is \textit{Hurwitz} and the solution to the discrete Lyapunov equation $v - e^{A^\top_Ks} v e^{A_Ks} = 0$ is only $v = O$. Since $\tilde v'_m \in \mathbb{S}^n$, there exists an orthogonal matrix $U$ such that $U^\top \tilde v'_m U = \operatorname{diag}(\mu_1, \dots, \mu_n)$, where $\qty{\mu_i}_{i=1}^n$ are eigenvalues of $\tilde v'_m$. Without loss of generality, we assume $\mu_n \neq 0$ since $\tilde v'_m \neq O$. For $x_{m+1}(0) \sim \mathcal {D}$, let $z = Ux_{m+1}(0)$. Then, $x_{m+1}(0)^\top \tilde v'_m x_{m+1}(0) = \sum_{i=1}^n \mu_i z_i^2$. Since $\mathcal{D}$ has a probability density function, the distribution of $z$ has also a probability density function $g(z_1, \dots, z_n)$. For $z_1, \dots, z_{n-1}$ such that $\int_{z_n} g(z_1, \dots, z_{n})\mathrm{d}z_n > 0$, the conditional probability density function of $z_n$ is given by $\frac{g(z_1, \dots, z_n)}{\int_{z_n} g(z_1, \dots, z_{n})\mathrm{d}z_n}$ and the conditional probability that $z_n$ satisfies $\sum_{i=1}^n \mu_i z_i^2 = 0$ is zero, because there are at most two $z_n$ in $\R$ which satisfy $z_n^2 = \frac{\sum_{i=1}^{n-1} \mu_i z_i^2}{\mu_n}$. 
    Therefore, the probability that $\langle v_{m+1}, \tilde v_m\rangle = x_{m+1}(0)^\top \tilde v'_m x_{m+1}(0) = 0$ is zero.
    %Since $v_{m+1} \in V_m$ means $\langle v_{m+1}, \tilde v_m\rangle = 0$,
    That is,
    $v_{m+1} \notin V_m$ with probability 1. 
    By induction, we obtain the result. \qed
\end{proof}
% \end{document}
% \documentclass[main]{subfiles}
% \begin{document}
\section{Numerical Experiments}\label{sec:experiments}
In this section, we numerically demonstrate that the policy gradient projection algorithm can solve
the LQR problem efficiently in the model free setting.
Based on \cite{fatkhullin2021optimizing}, we consider the problem \eqref{eq:problem} with
$A  =(J-G)H$,
$B  = \ones(10, 4)+\frac{1}{2}\rand(10, 4)$,
$C  = \ones(2, 10)+\frac{1}{2}\rand(2, 10)$,
$Q  = I$,
$R  = I$,
where
$J = \tilde J - \tilde J^\top$, $\tilde J = \randn(10, 10)$,
$G = \tilde G$  $\tilde G^\top$, $\tilde G = \randn(10, 10)$,
$H = \tilde H \tilde H^\top$, $\tilde H = \randn(10, 10)$,
$\ones(a, b)$ is $a\times b$ matrix of ones,
$\rand(a, b)$ is $a\times b$ matrix with all entries distributed as the uniform distribution on $[0, 1]$,
and $\randn(a, b)$ is $a\times b$ matrix with all entries distributed as the standard normal distribution.
We assume the distribution $\mathcal{D}$ is the uniform distribution on $[-1, 1]^n$.
Since $J$ is skew-adjoint and $G, H$ are positive definite,
$A$ is \textit{Hurwitz}, as mentioned in Section \ref{sec:problem}.
Therefore, $A_{K_0}$ is \textit{Hurwitz} for $K_0 = O$.
We set the parameters $r = 0.01, \tau = 100$
and define $\Omega$ by
\begin{align}
	\Omega = \{K\in \R^{4\times 2}\mid K\circ S = O\},\quad S =\begin{pmatrix}
		1 & 1 & 0 & 0 \\
		0 & 0 & 1 & 1
	\end{pmatrix}^\top.
\end{align}

Fig.~\ref{fig:totalerror} illustrates that
the mean and standard deviation of 20 trials of the relative error
$\frac{\normF{\bar \grad f(K)-\grad f(K)}}{\normF{\grad f(K)}}$
in gradient estimation.
The relative error with variance reduction was much smaller than that of Algorithm 1.

Fig.~\ref{fig:convergence-curve} illustrates the mean and variance of 20 trials of the convergence curve of Algorithm~\ref{alg:mfgp},
where we set $\alpha = 2\cdot 10^{-4}, 2\cdot 10^{-5}$ and $N=15$ for Algorithm ~\ref{alg:mfgp} with baseline and $N=70$ for Algorithm~\ref{alg:mfgp} without baseline. This is because the estimation procedure of the baseline requires additional $\frac{n(n+1)}{2}=55$ samples.
Because of the large variance of the estimated gradient, Algorithm~\ref{alg:mfgp} without baseline made the system unstable quickly in the case of $\alpha=2\cdot 10^{-4}$.
As shown in Fig.~\ref{fig:convergence-curve}, the convergence rate of Algorithm~\ref{alg:mfgp} is sublinear and Algorithm~\ref{alg:mfgp} with baseline is more stable than Algorithm~\ref{alg:mfgp} without baseline even if we take into account additional 55 samples required to estimate the baseline.

\begin{figure}[t]
	\centering
	\begin{minipage}{0.48\columnwidth}
		\centering
		\includegraphics[width=1\columnwidth]{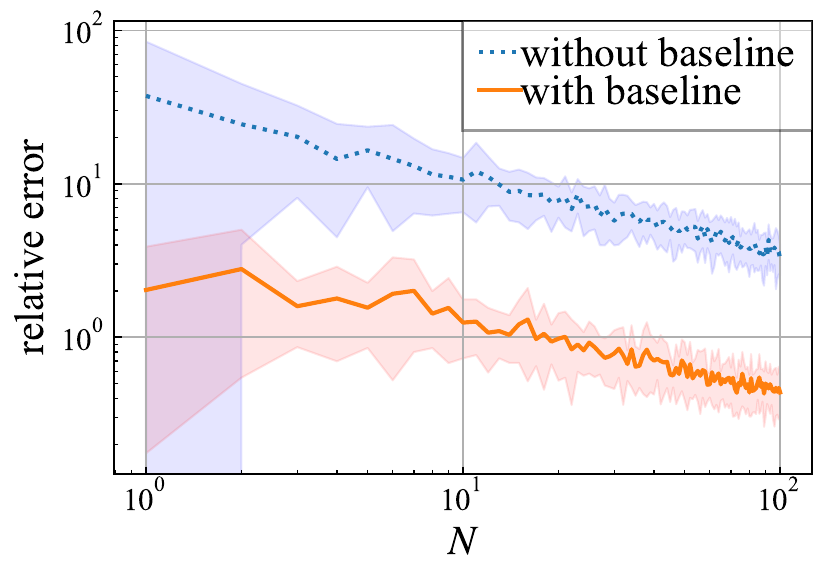}
		\caption{Relative error in gradient estimation.}
		\label{fig:totalerror}
	\end{minipage}
	\begin{minipage}{0.48\columnwidth}
		\centering
		\includegraphics[width=1\columnwidth]{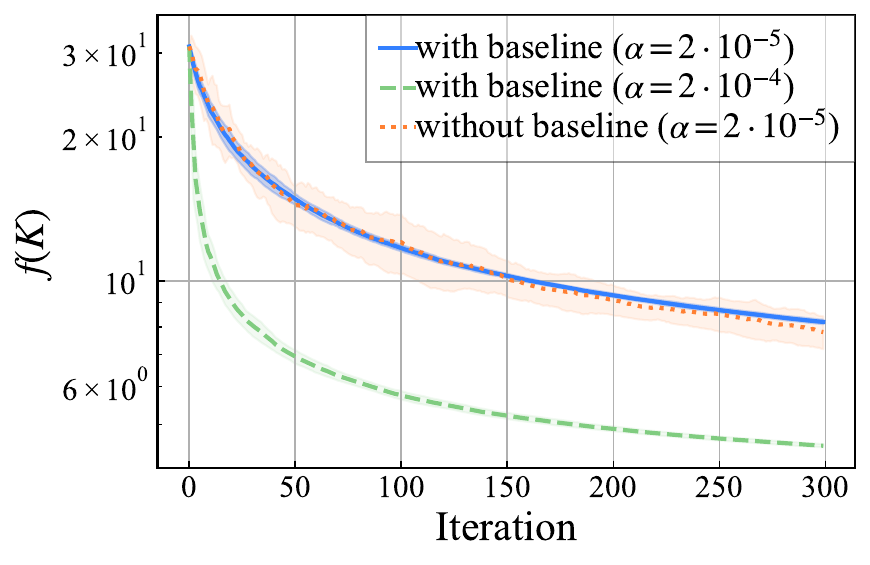}
		\caption{Convergence curve of Algorithm \ref{alg:mfgp}.}
		\label{fig:convergence-curve}
	\end{minipage}
\end{figure}

% \end{document}
% \documentclass[main]{subfiles}
% \begin{document}
\section{Conclusion}\label{sec:conclusion}
In this paper, we considered the non-convex optimization problem with convex constraints
based on the output feedback version of LQR problems
under the assumption that system parameters are unknown.
To solve the problem, we proposed the policy gradient algorithm based on the gradient projection method and the zeroth order optimization.
We proved its global convergence to $\varepsilon$-stationary points with high probability.
We also proposed the variance reduction method using the baseline technique and proved that it is almost optimal.
In the numerical experiments, we showed that the baseline technique significantly reduces the variance in the gradient estimation and the model free method can achieve low LQR cost. 

Policy Gradient Projection can be extended to the objective function with regularization terms
using the proximal gradient method. In this setting, we are able to consider trade-offs
between cost function and structure such as sparsity~\cite{park2020structured}.
However, the convergence analysis would be more difficult,
and it is left for a future work.
In addition, the convergence of the gradient method with fixed step size could be slow since the smooth constant $L$ can be large depending on the initial feedback gain $K_0$. To overcome this issue, the gradient method with adaptive step size in the model based setting was considered in~\cite{bu2019lqr} and optimization methods on Riemannian manifolds were studied in~\cite{talebi2022policy}. Therefore, applying the adaptive step size to the model free algorithm is one of the important directions of future works.
Other interesting directions of future works would be
analysis for the natural policy gradient method~\cite{kakade2001natural} or other variants of the policy gradient method.

% \end{document}

\section*{Acknowledgment}
This work was supported by Japan
Society for the Promotion of Science KAKENHI under Grant 20K14760.

\appendix
%%%%%%%%%%%%%%%%%%%%
\subsection{Proof of Lemma 1}\label{sec:proof-lemma1}
\begin{proof}
  For $X$, see Lemma 16 in~\cite{mohammadi2021convergence}.

  Let $\mu$ be the largest eigenvalue of $Y$ and
  $v$ be a normalized eigenvector corresponding to $\mu$.
  Note that $\norm{Y}_2 = \mu$ since $Y\succeq 0$.
  In the following, we consider the case $\norm{C v}\geq \xi$ and the case $\norm{C v}\leq \xi$ separately.
  First, we consider the case $\norm{Cv}\geq \xi$.
  Using $f(K) = \tr(YC^\top(Q+K^\top RK)C)$, we have
%  \begin{align}
    $a  \geq \tr(YC^\top(Q+K^\top RK)C)  \geq \tr(YC^\top Q C)$.
 % \end{align}
  Since $\mu vv^\top \preceq Y$,
%  \begin{align}
    $\tr(YC^\top Q C)  \geq \tr(\mu vv^\top C^\top Q C) = \mu(Cv)^\top Q Cv$.
  %\end{align}
  Therefore,
    $a  \geq \mu(Cv)^\top Q Cv
       \geq \mu\norm{Cv}^2 \lmin(Q) 
       \geq \mu\xi^2 \lmin(Q)$ and
    $\mu \leq \frac{a}{\xi^2\lmin(Q)}$.
  Next, we consider the case $\norm{Cy}\leq \xi$.
  From \eqref{eq:lyapunovY}, we have
  \begin{align}
    v^\top \qty(A_K Y + Y A_K^\top + \Sigma) v & = \mu v^\top (A_K+A_K^\top)v+v^\top \Sigma v = 0.
  \end{align}
  Since $A_{K_0}+A_{K_0}^\top \leq -2\sigma I$ and $\norm{Cv}\leq \xi$, we obtain
    $v^\top (A_K+A_K^\top)v = v^\top (A_{K_0}+A_{K_0}^\top)v - v^\top (B(K-K_0)C+(B(K-K_0)C)^\top)v \leq  -\sigma$.
  Therefore,
    $-\mu\sigma+v^\top \Sigma v  \geq 0$,
  and thus,
    $\norm{Y}_2 = \mu \leq \frac{\norm{\Sigma}_2}{\sigma}$.
  Combining both cases, we have $\norm{Y}_2 \leq \mathfrak{Y}(a)$.

  For $Y'$, we have
  \begin{align}
       -\frac{\norm{BECY+(BECY)^\top}_2}{\lmin(\Sigma)}\Sigma
       & \preceq -BECY-(BECY)^\top \\
       & \preceq \frac{\norm{BECY+(BECY)^\top}_2}{\lmin(\Sigma)}\Sigma,
  \end{align}
  and Lemma A.2 in~\cite{fatkhullin2021optimizing} yields
  $
    Y^- \preceq Y' \preceq Y^+,
  $
  where 
  \begin{align}
      Y^+ = \frac{\norm{BECY+(BECY)^\top}_2}{\lmin(\Sigma)}Y,\quad Y^- =-Y^+.
  \end{align}
  Therefore,
  \begin{align}
    \norm{Y'}_2  \leq \norm{Y^+}_2 
                 \leq \frac{2\norm{B}_2\norm{C}_2\norm{Y}_2^2}{\lmin(\Sigma)}.\label{eq:norm-Y-prime-mid}
  \end{align}
  Applying the bound on $\norm{Y}_2$ to \eqref{eq:norm-Y-prime-mid}, we have the result.
\end{proof}
\subsection{Proof of Theorem 2}\label{sec:proof-theorem2}
The total error $\normF{\hat \grad f(K) - \grad f(K)}$ can be divided into the bias term $\normF{E\qty[\hat \grad f(K)] - \grad f(K)}$ and the variance term $\normF{\hat \grad f(K) - E\qty[\hat \grad f(K)]}$.

First, we bound the bias term.
The estimated gradient $\hat \grad f(K)$ in~\eqref{eq:hatgrad} can be expressed in the form
\begin{align}
  \hat \grad f(K) & = \frac{1}{Nr}\sum_{i=1}^N \tilde f_\tau(K+rU_i;x_i(0))U_i.\label{eq:hatgrad-ftau}
\end{align}

For any initial state $x(0)$ and $r > 0$, we define 
the smooth function $g_r(K)$ by
$
  g_r(K) := E_{U\sim \mathcal{B}}\qty[f(K+rU)],
$
where $\mathcal{B}$ is the uniform distribution over the set $\{U\in \R^{m\times p}\mid \norm{U}\leq \sqrt{mp}\}$.
Then, the bias in $\hat \grad f(K)$ can be divided into two parts as follows.
\begin{align}
  &\normF{\grad f(K)-E\qty[\hat\grad f(K)]}\leq \normF{\grad f(K)-\grad g_r(K)} \\
  &+ \normF{\grad g_r(K)-E\qty[\hat\grad f(K)]}, \label{eq:bias}
\end{align}
where the expectation is taken over $x_i(0)\sim \mathcal D$ and $U_i \sim \mathcal S$.

For the first term in \eqref{eq:bias}, we have the following bound.
\begin{lemma}\label{lem:biasr}
  For any $K\in S(a)$ and $r \leq r_0$,
  $
    \normF{\grad f(K)-\grad g_r(K)} \leq Lr\sqrt{mp},
  $
  where $L$ is the Lipschitz constant defined as~\eqref{eq:L} of $\grad f$ on $S(2a)$.
\end{lemma}
\begin{proof}
  From $L$-smoothness of $f$, we have $\normF{\grad f(K) - \grad f(K+rU)}\leq Lr\sqrt{mp}$. Therefore, $\normF{\grad f(K)-\grad g_r(K)} = E\qty[\normF{\grad f(K)-\grad f(K+rU)}]\leq Lr\sqrt{mp}$. \qed
\end{proof}

Lemma 26 in Supplementary material of \cite{fazel2018global} implies
  $\grad g_r(K)  = E\qty[\frac{1}{r}\tilde f(K+rU;x_0)U]$,
and \eqref{eq:hatgrad-ftau} yields
 $E\qty[\hat \grad f(K)]  = E\qty[\frac{1}{r} \tilde f_\tau(K+rU;x_0)U]$,
where the expectation is taken over $x_0\sim \mathcal D$ and $U\sim \mathcal S$.
By using these relations, we have the following upper bound of the second term in~\eqref{eq:bias}.
\begin{align}
         &\normF{\grad g_r(K)- E\qty[\hat\grad f(K)]}\\
         &\leq \frac{1}{rN}\sum_{i=1}^N E\qty[\abs{\tilde f(K+rU_i;x_i)-\tilde f_\tau(K+rU_i;x_i)}\normF{U_i}].\label{eq:biastau}
\end{align}
To bound the right-hand side, we introduce the following lemma.
\begin{lemma}\label{lem:norm-x-t}
  For $K\in S(a)$ with $a\in \R$ and $x(t)$, which follows \eqref{eq:closedloop}, we have
  \begin{align}
    \norm{x(t)} & \leq \frac{2\mathfrak{Y}(a)\mathfrak{A}(a)}{\lmin(\Sigma)}e^{-(\lmin(\Sigma)/\mathfrak{Y}(a))t}\norm{x(0)}^2,\label{eq:norm-x-t}
  \end{align}
  where 
  $
    \mathfrak{A}(a) = \norm{A}_2+\norm{B}_2\norm{C}_2\kappa(a).
  $
\end{lemma}
\begin{proof}
  From Lemma 12 in \cite{mohammadi2021convergence}, we have
  $
    \norm{e^{At}}_2^2 \leq \qty(\norm{Y}_2/\lmin(Y))e^{-(\lmin(\Sigma)/\norm{Y}_2)t}.
  $
  Therefore,
  \begin{align}
    \norm{x(t)}^2 %& \leq %\norm{e^{At}}_2^2\norm{x(0)}^2  \\
                   \leq \qty(\norm{Y}_2/\lmin(Y))e^{-(\lmin(\Sigma)/\norm{Y}_2)t}\norm{x(0)}^2.\label{eq:lem12}
  \end{align}
  Lemma \ref{lem:xybound} yields $\norm{Y}_2  \leq \mathfrak{Y}(a)$.
  This and Lemma A.5 in~\cite{fatkhullin2021optimizing} imply
  \begin{align}
    \lmin(Y) &\geq \frac{\lmin(\Sigma)}{2\norm{A_K}_2} 
    \geq \frac{\lmin(\Sigma)}{2(\norm{A}_2+\norm{B}_2\norm{C}_2\kappa(a))}.
  \end{align}
  Substituting these inequalities into \eqref{eq:lem12}, we obtain \eqref{eq:norm-x-t}.\qed
\end{proof}

We are in a position to obtain an upper bound on the left side of~\eqref{eq:biastau}.
\begin{lemma}
  \label{lem:biastau}
  For any $\tau \geq 0$, $r \leq r_0$ and $K\in S(a)$,
  \begin{align}
    \normF{\grad g_r(K)-E\qty[\hat\grad f(K)]} =
    O(e^{-\eta \tau}/r),\label{eq:biastau2}
  \end{align}
  where $\eta = \lmin(\Sigma)/\mathfrak{Y}(2a)$.
\end{lemma}
\begin{proof}
  Lemma~\ref{lem:smallr} implies that $K+rU_i \in S(2a)$, and we have
  \begin{align}
    \normF{\grad g_r(K)-E\qty[\hat\grad f(K)]}  &\leq  \frac{1}{r} E\qty[x(\tau)^\top Xx(\tau)\normF{U}]   \\
    &\leq \frac{\sqrt{mp}}{r} \norm{X}_2E\qty[\norm{x(\tau)}^2],
  \end{align}
  where
  $x(t)$ follows
    $\dot x(t)  = A_{K+rU} x(t)$, and
  we used the fact
  \begin{align}
    \tilde f(K+rU;x(0))-\tilde f_\tau(K+rU;x(0)) & = x(\tau)^\top Xx(\tau),
  \end{align}
  where $X$ is the solution to~\eqref{eq:lyapunovX}.
  From Lemma \ref{lem:norm-x-t}, we have
  \begin{align}
    \norm{x(\tau)}^2 & \leq \frac{2\mathfrak{Y}(2a)\mathfrak{A}(2a)}{\lmin(\Sigma)}e^{-(\lmin(\Sigma)/\mathfrak{Y}(2a))\tau}\norm{x(0)}^2.
  \end{align}
  Thus, \eqref{eq:biastau2} holds,
  because $E[\norm{x(0)}^2]=\tr(\Sigma)$.\qed
\end{proof}

Next, we obtain an upper bound of the variance term.
\begin{lemma}
  \label{lem:variance}
  For any $\varepsilon > 0, \delta > 0$ and $K\in S(a)$, if $N = O((\log 1/\delta)/\varepsilon^4)$, we have
  \begin{align}
    \Probability\qty(\normF{\hat \grad f(K)-E\qty[\hat \grad f(K)]}\geq \varepsilon)\leq \delta
  \end{align}
\end{lemma}
\begin{proof}
    Using matrix Bernstein inequality~\cite{tropp2012user}, we obtain the result in the same way with Lemma 27 in Supplementary material of~\cite{fazel2018global}. \qed
\end{proof}

Combining Lemma~\ref{lem:biasr},~\ref{lem:biastau}, and~\ref{lem:variance} completes the proof of Theorem~\ref{thm:totalerror}.

\subsection{Proof of Theorem~\ref{thm:estimate-f}}\label{proof:thm:estimate-f}
For any $t \geq 0$, the observation $\bar y(t;x(0))$ is determined by
%\begin{align}
    $\bar y(t;x(0)) = Fx(t)$
%\end{align}
with $F := [C \quad Ce^{(A-BKC) h_1} \cdots Ce^{(A-BKC) h_{D-1}}]^\top$. 
Conversely, $x(t)$ is determined by $\bar y(t;x(0))$ if $D$ is large enough.
\begin{lemma}\label{lem:reconstruction}
  Let $\beta = 2(\norm{A}_2+\norm{B}_2\norm{C}_2\kappa(a))$.
  For any $T>0$, if $D>2(n-1)+\frac{T}{2\pi}\beta$, $F$ is column full rank and
%  \begin{align}
    $x(t)  = F^+\bar y(t;x(0))$
    %\label{eq:x-from-y-bar}
  %\end{align}
  with $F^+ = (F^\top F)^{-1}F^\top$. 
\end{lemma}
\begin{proof}
  Let $\beta' = \max_{i, j}(\Im(\lambda_i(A_K)-\lambda_j(A_K)))$.
   Theorem 2 in \cite{modares2016optimal}
   and the assumption that $(A, C)$ is observable
   imply that if $D>2(n-1)+\frac{T}{2\pi}\beta'$,
   $F$ is column full rank.
  Therefore, $F^+$ is well-defined and
  \begin{align}
      (F^\top F)^{-1}F^\top\bar y(t;x(0)) &= (F^\top F)^{-1}F^\top F x(t) = x(t).
  \end{align}
  Thus, it is sufficient to show $\beta \geq \beta'$.
  We have
  \begin{align}
      \beta' \leq 2\max_{i}(|\lambda_i(A_K)|)\leq 2\norm{A_K}_2\leq \beta,
  \end{align}
  which completes the proof.\qed
\end{proof}

As a corollary, we can show $\tilde f(K;x(t))$ can be expressed as a quadratic form in terms of $\bar y(t;x(0))$.
\begin{corollary}\label{cor:f-from-y-bar}
  For any $x(0)$,
  \begin{align}
    \tilde f(K;x(t)) & = \bar y(t;x(0))^\top P(K)\bar y(t;x(0)),\label{eq:f-from-y-bar}
  \end{align}
  where $P(K) = (F^+)^\top X F^+$.
\end{corollary}

According to \cite{modares2016optimal}, the matrix $P(K)$ satisfies the Bellman equations \eqref{eq:bellmaneq}
for any $s \geq 0$ and $\qty{x_j(t)}_{j=1, \dots, \frac{n(n+1)}{2}}$, which follow system~\eqref{eq:system}.

\begin{lemma} \label{lem:f-from-y-bar}
  Assume that $x_j(0)x_j(0)^\top - x_j(s)x_j(s)^\top(j=1, \dots, \frac{n(n+1)}{2})$
  are linearly independent on $\mathbb{S}^n$.
  For any solution $\hat P(K)$ of~\eqref{eq:bellmaneq}, initial condition $x(0)$, and $t \geq 0$,
  we have
  \begin{align}
    \tilde f(K;x(t)) = \bar y(t;x(0))^\top \hat P(K)\bar y(t;x(0)).
  \end{align}
\end{lemma}
\begin{proof}
  Let $v_j = x_j(0)x_j(0)^\top - x_j(s)x_j(s)^\top$
  and $w_j = \bar{y}(0;x_j(0))\bar{y}(0;x_j(0))^\top - \bar{y}(s;x_j(0)) \bar{y}(s;x_j(0))^\top$.
  We define $V$ by the linear space generated by $\{v_j\}_j$ and $W$ by the linear space generated by $\{w_j\}_j$.
  Since $V \subset \mathbb{S}^n$ and $\dim V = \dim \mathbb{S}^n = \frac{n(n+1)}{2}$, we have $V = \mathbb{S}^n$.
  Thus, the set $\{v_i\}_i$ is a basis of $\mathbb{S}^n$.
  Let $v = x(t)x(t)^\top\in \mathbb{S}^n$ for $x(0)\in \R^n$.
  Then, there exists the sequence $\{\alpha_i\}_i$ such that
    $v  = \sum_{i=1}^{\frac{n(n+1)}{2}} \alpha_i v_i$.
  Define the linear map $\mathcal{F}:V\to W$ by
    $\mathcal{F}(v')  = Fv'F^\top$,
  where $v'\in V$.
  Note that $\mathcal{F}(v_j) = w_j$.
  From~\eqref{eq:bellmaneq}, we have
    $\q<\hat P(K), \mathcal F(v_j)> = \q<P(K), \mathcal F(v_j)>$, and thus
  %\begin{align}
    $\q<\hat P(K), \mathcal F(v)> =\q<P(K), \mathcal F(v)>$.
  %\end{align}
  %The first and last equalities hold because the inner product $\q<\cdot, \cdot>$
  %is linear with respect to the second argument.
  Then, Eq.~\eqref{eq:f-from-y-bar} yields
  \begin{align}
    \q<P(K), \mathcal F(v)> & = \bar y(t;x(0))^\top P(K)\bar y(t;x(0))  = \tilde f(K;x(t)).
  \end{align}
  Therefore,
  \begin{align}
    \bar y(t;x_0)^\top \hat P(K)\bar y(t;x_0) &= \q<\hat P(K), \mathcal F(v)>% =\q<P(K), \mathcal F(v)> \\
    = \tilde f(K;x(t)),
  \end{align}
  which completes the proof.\qed
\end{proof}

Since $\tilde f_\tau(K;x(0)) = \tilde f(K;x(0)) - \tilde f(K;x(\tau))$, Corollary \ref{cor:f-from-y-bar} and Lemma \ref{lem:f-from-y-bar} ensure that Theorem \ref{thm:estimate-f} holds.

\bibliographystyle{IEEEtran}
\bibliography{references}

\vspace{-2em}

\end{document}